\newtheorem{defn}{Definition}[section]
\newtheorem{thm}[defn]{Theorem}
\newtheorem{lem}[defn]{Lemma}
\newtheorem{prop}[defn]{Proposition}
\begin{document}
\title{{\bf Generalized derivations of Hom-Jordan algebras}}
\author{\normalsize \bf Chenrui Yao,  Yao Ma,  Liangyun Chen}
\date{{\small{ School of Mathematics and Statistics,  Northeast Normal University,\\ Changchun 130024, CHINA
}}} \maketitle
\date{}

   {\bf\begin{center}{Abstract}\end{center}}

In this paper, we give some properties of generalized derivation algebras of Hom-Jordan algebras. In particular, we show that $GDer(V) = QDer(V) + QC(V)$, the sum of the quasiderivation algebra and the quasicentroid. We also prove that $QDer(V)$ can be embedded as derivations into a larger Hom-Jordan algebra. General results on centroids of Hom-Jordan algebras are also developed in this paper.

\noindent\textbf{Keywords:} \, Generalized derivations, quasiderivations, centroids, Hom-Jordan algebras.\\
\textbf{2010 Mathematics Subject Classification:} 17B40, 17C10, 17C99.
\renewcommand{\thefootnote}{\fnsymbol{footnote}}
\footnote[0]{ Corresponding author(L. Chen): chenly640@nenu.edu.cn.}
\footnote[0]{Supported by  NNSF of China (Nos. 11771069 and 11801066), NSF of  Jilin province (No. 20170101048JC),   the project of Jilin province department of education (No. JJKH20180005K) and the Fundamental Research Funds for the Central Universities(No. 130014801).}

\section{Introduction}

In 1932, the physicist P. Jordan proposed a program to discover a new algebraic setting for quantum mechanics, so Jordan algebras were created in this proceeding. Moreover, Jordan algebras were truned out to have illuminating connections with many areas of mathematics. A. A. Albert renamed them Jordan algebras and developed a successful structure theory for Jordan algebras over any field of characteristic zero in \cite{A1}.

Hom-type algebras have been studied by many authors in \cite{A2}, \cite{A3}, \cite{L1}, \cite{D1}. In general, Hom-type algebras are a kind of algebras which are obtained by twisting the identity defining the structure with one or several linear maps(called the twisted maps) on the basis of the original algebras. The concept of Hom-Jordan algebras was first introduced by A. Makhlouf in his paper \cite{M2}. He introduced a Hom-Jordan algebra and showed that it fits with the Hom-associative structure, that is a Hom-associative algebra leads to a Hom-Jordan algebra by considering a plus algebra.

As is well known, derivations and generalized derivations play an important role in the research of structure and property in algebraic system. The research on generalized derivations was started by Leger and Luks in \cite{L2}. They gave many properties about generalized derivations of Lie algebras. From then on, many authors generalized the results in \cite{L2} to other algebras. R. X. Zhang and Y. Z. Zhang generalized it to Lie superalgebras in \cite{Z1}. And L. Y. Chen, Y. Ma and L. Ni generalized the results to Lie color algebras successfully in \cite{C1}. J. Zhou, Y. J. Niu and L. Y. chen generalized the above results to Hom-Lie algebras in \cite{Z2}. As for Jordan algebras, A. I. Shestakov generalized the results into Jordan superalgebras in \cite{S1}. He showed that for semi-simple Jordan algebras over any field of characteristic $0$ or simple Jordan algebras over any field of characteristic other than $2$, the generalized derivations are the sum of the derivations and the centroids with some exceptions.

The purpose of this paper is to generalized some beautiful results in \cite{Z2} and \cite{M3} to Hom-Jordan algebras. We proceed as follow. In Section \ref{se:2}, we recall some basic definitions and propositions which will be used in what follows. In Section \ref{se:3}, we'll give some properties about generalized derivations of Hom-Jordan algebras and their Hom-subalgebras. In Section \ref{se:4}, we show that quasiderivations of a Hom-Jordan algebra can be embedded as derivations into a larger Hom-Jordan algebra and obtain a direct sum decomposition of $Der(\breve{V})$ when the centralizer of $V$ equals to zero. In Section \ref{se:5}, we give some propositions with respect to the centroids of Hom-Jordan algebras.

\section{Preliminaries}\label{se:2}

\begin{defn}\cite{M1}
An algebra $J$ over a field $\rm{F}$ is a Jordan algebra satisfying for any $x, y \in J$,
\begin{enumerate}[(1)]
\item $x \circ y = y \circ x$;
\item $(x^{2} \circ y) \circ x = x^{2} \circ (y \circ x)$.
\end{enumerate}
\end{defn}
\begin{defn}\cite{M2}
A Hom-Jordan algebra over a field $\rm{F}$ is a triple $(V, \mu, \alpha)$ consisting of a linear space $V$, a bilinear map $\mu : V \times V \rightarrow V$ which is commutative and a linear map $\alpha : V \rightarrow V$ satisfying for any $x, y \in V$,
\[\mu(\alpha^{2}(x), \mu(y, \mu(x, x))) = \mu(\mu(\alpha(x), y), \alpha(\mu(x, x))),\]
where $\alpha^{2} = \alpha \circ \alpha$.
\end{defn}
\begin{defn}
A Hom-Jordan algebra $(V, \mu, \alpha)$ is called multiplicative if for any $x, y \in V$, $\alpha(\mu(x, y)) = \mu(\alpha(x), \alpha(y))$.
\end{defn}
\begin{defn}\cite{B1}
A subspace $W \subseteq V$ is a Hom-subalgebra of $(V, \mu, \alpha)$ if $\alpha(W) \subseteq W$ and
\[\mu(x, y) \in W,\quad\forall x, y \in W.\]
\end{defn}
\begin{defn}\cite{B1}
A subspace $W \subseteq V$ is a Hom-ideal of $(V, \mu, \alpha)$ if $\alpha(W) \subseteq W$ and
\[\mu(x, y) \in W,\quad\forall x \in W, y \in V.\]
\end{defn}
\begin{defn}\cite{B1}
Let $(V, \mu, \alpha)$ and $(V^{'}, \mu^{'}, \beta)$ be two Hom-Jordan algebras. A linear map $\phi : V \rightarrow V^{'}$ is said to be a homomorphism of Hom-Jordan algebras if
\begin{enumerate}[(1)]
\item $\phi(\mu(x, y)) = \mu^{'}(\phi(x), \phi(y))$;
\item $\phi \circ \alpha = \beta \circ \phi$.
\end{enumerate}
\end{defn}
\begin{lem}\cite{B1}\label{le:2.7}
Let $(V, \mu, \alpha)$ be a Hom-Jordan algebra over a field $\rm{F}$, we define a subspace $\mathcal{W}$ of $End(V)$ where $\mathcal{W} = \{w \in End(V) | w \circ \alpha = \alpha \circ w\}$, $\sigma : \mathcal{W} \rightarrow \mathcal{W}$ is a linear map satisfying $\sigma(w) = \alpha \circ w$.
\begin{enumerate}[(1)]
\item A triple $(\mathcal{W}, \nu, \sigma)$, where the multiplication $\nu : \mathcal{W} \times \mathcal{W} \rightarrow \mathcal{W}$ is defined for $w_{1}, w_{2} \in \mathcal{W}$ by
    \[\nu(w_{1}, w_{2}) = w_{1} \circ w_{2} + w_{2} \circ w_{1},\]
    is a Hom-Jordan algebra over $\rm{F}$.
\item A triple $(\mathcal{W}, \nu^{'}, \sigma)$, where the multiplication $\nu^{'} : \mathcal{W} \times \mathcal{W} \rightarrow \mathcal{W}$ is defined for $w_{1}, w_{2} \in \mathcal{W}$ by
    \[\nu^{'}(w_{1}, w_{2}) = w_{1} \circ w_{2} - w_{2} \circ w_{1},\]
    is a Hom-Lie algebra over $\rm{F}$.
\end{enumerate}
\end{lem}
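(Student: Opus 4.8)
The plan is to regard $\mathcal{W}$ as an associative algebra under composition whose twisting map $\sigma$ is built from an operator $\alpha$ that is central in the sense of commuting with every element of $\mathcal{W}$; both assertions then reduce to the familiar principle that an associative product yields a Jordan product by symmetrization and a Lie product by the commutator. First I would record the two closure facts on which everything rests. For $w \in \mathcal{W}$ we have $w \circ \alpha = \alpha \circ w$, hence $\sigma(w) \circ \alpha = \alpha \circ w \circ \alpha = \alpha \circ \sigma(w)$, so $\sigma(\mathcal{W}) \subseteq \mathcal{W}$ and $\sigma$ is a well-defined linear map on $\mathcal{W}$; and for $w_1, w_2 \in \mathcal{W}$ one checks $(w_1 \circ w_2) \circ \alpha = w_1 \circ \alpha \circ w_2 = \alpha \circ (w_1 \circ w_2)$, so $\mathcal{W}$ is closed under composition, and therefore under both $\nu$ and $\nu'$. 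The observation I would use repeatedly is that $\alpha$ commutes with any composite of elements of $\mathcal{W}$, so within any monomial the $\alpha$-factors may be moved freely to one side.

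The conceptual bridge is that $(\mathcal{W}, \circ, \sigma)$ is \emph{Hom-associative}: for $w_1,w_2,w_3\in\mathcal{W}$,
$$\sigma(w_1)\circ(w_2\circ w_3) = \alpha\circ w_1\circ w_2\circ w_3 = (w_1\circ w_2)\circ\sigma(w_3),$$
where both equalities use associativity of $\circ$ together with the centrality of $\alpha$. From this point the two parts are the standard plus-algebra and commutator-algebra constructions, which I would nonetheless verify directly so that the proof is self-contained.

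For part (1), commutativity of $\nu$ is immediate from its definition. For the Hom-Jordan identity I would substitute $w_1,w_2\in\mathcal{W}$ and expand the two sides
$$\nu(\sigma^2(w_1), \nu(w_2, \nu(w_1, w_1))) \quad\text{and}\quad \nu(\nu(\sigma(w_1), w_2), \sigma(\nu(w_1, w_1))),$$
using $\nu(w_1,w_1) = 2\,w_1\circ w_1$, $\sigma(w) = \alpha\circ w$ and $\sigma^2(w)=\alpha\circ\alpha\circ w$. After pulling the $\alpha$-factors to the front (legitimate since $\alpha$ commutes with $w_1$ and $w_2$) and using associativity of $\circ$, each side becomes $2\,\alpha^2$ composed with the same four monomials in $w_1$ and $w_2$, so the two sides agree term by term. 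The only real labour here is the bookkeeping of the expansion, and the device that makes it collapse is the centrality of $\alpha$.

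For part (2), anticommutativity of $\nu'$ is clear. For the Hom-Jacobi identity I would compute, for $w_1,w_2,w_3\in\mathcal{W}$,
$$\nu'(\sigma(w_1), \nu'(w_2, w_3)) = \alpha\circ\big(w_1\circ(w_2\circ w_3 - w_3\circ w_2) - (w_2\circ w_3 - w_3\circ w_2)\circ w_1\big),$$
again pulling $\alpha$ out in front, and likewise for the two cyclic permutations. Summing the three, the bracketed expression is exactly the ordinary Jacobi expression for the commutator on the associative algebra $(\mathcal{W},\circ)$, which vanishes, so the whole cyclic sum equals $\alpha\circ 0 = 0$. I expect the main obstacle to be nothing deeper than careful sign and index bookkeeping in these expansions; the entire conceptual content is carried by the two closure facts and the centrality of $\alpha$.
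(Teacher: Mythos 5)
Your proof is correct. One point of context: this lemma is not proved in the paper at all --- it is imported from \cite{B1} as a known result --- so there is no in-paper argument to compare yours against. Your route is the natural one: the two closure facts ($\sigma(\mathcal{W})\subseteq\mathcal{W}$ and $\mathcal{W}\circ\mathcal{W}\subseteq\mathcal{W}$), the centrality of $\alpha$ over $\mathcal{W}$, and then the classical plus-algebra and commutator constructions. Your key computations check out: for (1), both sides of the Hom-Jordan identity expand to
\[
2\,\alpha^{2}\circ\bigl(w_{1}\circ w_{2}\circ w_{1}^{2}+w_{1}^{3}\circ w_{2}+w_{2}\circ w_{1}^{3}+w_{1}^{2}\circ w_{2}\circ w_{1}\bigr),
\]
and for (2) the cyclic sum factors as $\alpha$ composed with the ordinary Jacobi sum of commutators, which vanishes. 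Two details worth keeping explicit in a final write-up: the paper's Definition 2.2 of a Hom-Jordan algebra does not require multiplicativity of the twisting map (indeed $\sigma(\nu(w_{1},w_{2}))=\alpha\circ(w_{1}\circ w_{2}+w_{2}\circ w_{1})$ differs from $\nu(\sigma(w_{1}),\sigma(w_{2}))=\alpha^{2}\circ(w_{1}\circ w_{2}+w_{2}\circ w_{1})$, so $(\mathcal{W},\nu,\sigma)$ is generally not multiplicative), and your Hom-Jacobi verification matches the untwisted Hom-Lie axioms, which is what the statement asserts; so neither point is a gap, but both are worth a sentence so the reader sees that only the weaker axioms are being claimed.
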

\begin{defn}\cite{B1}
For any nonnegative integer $k$, a linear map $D : V \rightarrow V$ is called an $\alpha^{k}$-derivation of the Hom-Jordan algebra $(V, \mu, \alpha)$, if
\begin{enumerate}[(1)]
\item $D \circ \alpha = \alpha \circ D$;
\item $D(\mu(x, y)) = \mu(D(x), \alpha^{k}(y)) + \mu(\alpha^{k}(x), D(y)),\quad\forall x, y \in V$.
\end{enumerate}
\end{defn}
\begin{lem}\cite{B1}\label{le:2.9}
For any $D \in Der_{\alpha^{k}}(V)$ and $D^{'} \in Der_{\alpha^{s}}(V)$, define their commutator $\nu^{'}(D, D^{'})$ as usual:
\[\nu^{'}(D, D^{'}) = D \circ D^{'} - D^{'} \circ D.\]
Then $\nu^{'}(D, D^{'}) \in Der_{\alpha^{k + s}}(V)$.
\end{lem}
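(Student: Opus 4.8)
The plan is to verify the two defining conditions of an $\alpha^{k+s}$-derivation directly for the operator $\nu'(D, D') = D \circ D' - D' \circ D$, using only that $D$ is an $\alpha^{k}$-derivation and $D'$ is an $\alpha^{s}$-derivation. No structural input beyond the two clauses of the derivation definition is needed, so this is a direct verification rather than an argument requiring any auxiliary construction.

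First I would dispose of condition (1), commutativity with $\alpha$. Since $D \circ \alpha = \alpha \circ D$ and $D' \circ \alpha = \alpha \circ D'$, both $D$ and $D'$ commute with $\alpha$ and hence with every power of $\alpha$; therefore $D \circ D' \circ \alpha = \alpha \circ D \circ D'$ and likewise $D' \circ D \circ \alpha = \alpha \circ D' \circ D$, whence $\nu'(D,D') \circ \alpha = \alpha \circ \nu'(D,D')$. This costs essentially no computation.

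The substance is condition (2). I would expand $(D \circ D')(\mu(x,y))$ by applying the $\alpha^{s}$-derivation rule for $D'$ first and then the $\alpha^{k}$-derivation rule for $D$ to each of the two resulting summands, using $D \circ \alpha^{s} = \alpha^{s} \circ D$ and $D' \circ \alpha^{k} = \alpha^{k} \circ D'$ to merge the powers of $\alpha$. This yields four terms: the two ``diagonal'' terms $\mu((D\circ D')(x), \alpha^{k+s}(y))$ and $\mu(\alpha^{k+s}(x), (D\circ D')(y))$, together with two ``cross'' terms $\mu(\alpha^{k} D'(x), \alpha^{s} D(y))$ and $\mu(\alpha^{s} D(x), \alpha^{k} D'(y))$. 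Carrying out the same expansion for $(D' \circ D)(\mu(x,y))$ produces the analogous four terms, and the crucial observation is that the two cross terms arising there coincide exactly with the two cross terms above.

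Subtracting the two expansions, the cross terms cancel and only the diagonal terms remain, leaving
\[\nu'(D,D')(\mu(x,y)) = \mu(\nu'(D,D')(x), \alpha^{k+s}(y)) + \mu(\alpha^{k+s}(x), \nu'(D,D')(y)),\]
which is precisely condition (2) at degree $k+s$. The only delicate point---the ``hard part,'' such as it is---is the bookkeeping of the commutation relations $D \circ \alpha = \alpha \circ D$ and $D' \circ \alpha = \alpha \circ D'$, which must be tracked carefully to confirm that the cross terms match and therefore cancel; once this is seen, the conclusion $\nu'(D,D') \in Der_{\alpha^{k+s}}(V)$ is immediate.
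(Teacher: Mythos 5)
Your proof is correct: expanding $D\circ D'(\mu(x,y))$ and $D'\circ D(\mu(x,y))$ via the two derivation rules (merging powers of $\alpha$ by $D\circ\alpha^{s}=\alpha^{s}\circ D$ and $D'\circ\alpha^{k}=\alpha^{k}\circ D'$) produces in each case the same pair of cross terms $\mu(\alpha^{k}D'(x),\alpha^{s}D(y))+\mu(\alpha^{s}D(x),\alpha^{k}D'(y))$, which cancel in the commutator, and commutation of $\nu'(D,D')$ with $\alpha$ is immediate. The paper itself states this lemma without proof, citing \cite{B1}, and your direct expand-and-cancel verification is exactly the standard argument --- the same computation pattern the paper uses for the generalized-derivation case in Proposition \ref{prop:3.1}.
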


Let $(V, \mu, \alpha)$ be a multiplicative Hom-Jordan algebra. We denote the set of all $\alpha^{k}$-derivations by $Der_{\alpha^{k}}(V)$, then $Der(V) = \dotplus_{k \geq 0}Der_{\alpha^{k}}(V)$ provided with the commutator and the following map
\[\sigma : Der(V) \rightarrow Der(V);\quad \sigma(D) = \alpha \circ D\]
is a Hom-subalgebra of $(\mathcal{W}, \nu^{'}, \sigma)$ according to Lemma \ref{le:2.9} and is called the derivation algebra of $(V, \mu, \alpha)$.
\begin{defn}
For any nonnegative integer $k$, a linear map $D \in End(V)$ is said to be a generalized $\alpha^{k}$-derivation of $(V, \mu, \alpha)$, if there exist two linear maps $D^{'}, D^{''} \in End(V)$ such that
\begin{enumerate}[(1)]
\item $D \circ \alpha = \alpha \circ D,\; D^{'} \circ \alpha = \alpha \circ D^{'},\; D^{''} \circ \alpha = \alpha \circ D^{''}$;
\item $\mu(D(x), \alpha^{k}(y)) + \mu(\alpha^{k}(x), D^{'}(y)) = D^{''}(\mu(x, y)),\quad\forall x, y \in V$.
\end{enumerate}
\end{defn}
\begin{defn}\label{defn:2.11}
For any nonnegative integer $k$, a linear map $D \in End(V)$ is said to be an $\alpha^{k}$-quasiderivation of $(V, \mu, \alpha)$, if there exist a linear map $D^{'}\in End(V)$ such that
\begin{enumerate}[(1)]
\item $D \circ \alpha = \alpha \circ D,\; D^{'} \circ \alpha = \alpha \circ D^{'}$;
\item $\mu(D(x), \alpha^{k}(y)) + \mu(\alpha^{k}(x), D(y)) = D^{'}(\mu(x, y)),\quad\forall x, y \in V$.
\end{enumerate}
\end{defn}

Let $GDer_{\alpha^{k}}(V)$ and $QDer_{\alpha^{k}}(V)$ be the sets of generalized $\alpha^{k}$-derivations and of $\alpha^{k}$-quasiderivations, respectively. That is
\[GDer(V) = \dotplus_{k \geq 0}GDer_{\alpha^{k}}(V),\quad QDer(V) = \dotplus_{k \geq 0}QDer_{\alpha^{k}}(V).\]
It's easy to verify that both $GDer(V)$ and $QDer(V)$ are Hom-subalgebras of $(\mathcal{W}, \nu^{'}, \sigma)$(See Proposition \ref{prop:3.1}).
\begin{defn}
For any nonnegative integer $k$, a linear map $D \in End(V)$ is said to be an $\alpha^{k}$-centroid of $(V, \mu, \alpha)$, if
\begin{enumerate}[(1)]
\item $D \circ \alpha = \alpha \circ D$;
\item $\mu(D(x), \alpha^{k}(y)) = \mu(\alpha^{k}(x), D(y)) = D(\mu(x, y)),\quad\forall x, y \in V$.
\end{enumerate}
\end{defn}
\begin{defn}
For any nonnegative integer $k$, a linear map $D \in End(V)$ is said to be an $\alpha^{k}$-quasicentroid of $(V, \mu, \alpha)$, if
\begin{enumerate}[(1)]
\item $D \circ \alpha = \alpha \circ D$;
\item $\mu(D(x), \alpha^{k}(y)) = \mu(\alpha^{k}(x), D(y)),\quad\forall x, y \in V$.
\end{enumerate}
\end{defn}
Let $C_{\alpha^{k}}(V)$ and $QC_{\alpha^{k}}(V)$ be the sets of $\alpha^{k}$-centroids and of $\alpha^{k}$-quasicentroids, respectively. That is
\[C(V) = \dotplus_{k \geq 0}C_{\alpha^{k}}(V),\quad QC(V) = \dotplus_{k \geq 0}QC_{\alpha^{k}}(V).\]
\begin{defn}
For any nonnegative integer $k$, a linear map $D \in End(V)$ is said to be an $\alpha^{k}$-central derivation of $(V, \mu, \alpha)$, if
\begin{enumerate}[(1)]
\item $D \circ \alpha = \alpha \circ D$;
\item $\mu(D(x), \alpha^{k}(y)) = D(\mu(x, y)) = 0,\quad\forall x, y \in V$.
\end{enumerate}
\end{defn}

We denote the set of all $\alpha^{k}$-central derivations by $ZDer_{\alpha^{k}}(V)$, then $ZDer(V) = \dotplus_{k \geq 0}ZDer_{\alpha^{k}}(V)$.

According to the definitions, it's easy to show that $ZDer(V) \subseteq Der(V) \subseteq QDer(V) \subseteq GDer(V) \subseteq End(V)$.
\begin{defn}
Let $(V, \mu, \alpha)$ be a Hom-Jordan algebra. If $Z(V) = \{x \in V | \mu(x, y) = 0,\quad\forall y \in V\}$, then $Z(V)$ is called the centralizer of $(V, \mu, \alpha)$.
\end{defn}

\section{Generalized derivation algebras and their subalgebras}\label{se:3}

At first, we give some basic properties of center derivation algebras, quasiderivation algebras and generalized derivation algebras of a Hom-Jordan algebra.
\begin{prop}\label{prop:3.1}
Suppose that $(V, \mu, \alpha)$ is a multiplicative Hom-Jordan algebra. Then the following statements hold:
\begin{enumerate}[(1)]
\item $GDer(V)$, $QDer(V)$ and $C(V)$ are Hom-subalgebras of $(\mathcal{W}, \nu^{'}, \sigma)$;
\item $ZDer(V)$ is a Hom-ideal of $Der(V)$.
\end{enumerate}
\end{prop}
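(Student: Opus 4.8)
The plan is to verify, for each of the three sets in~(1), the two requirements for a subspace $S \subseteq \mathcal{W}$ to be a Hom-subalgebra of $(\mathcal{W}, \nu', \sigma)$: that $\sigma(S) \subseteq S$ and that $\nu'(D_1, D_2) = D_1 \circ D_2 - D_2 \circ D_1$ lies in $S$ for all $D_1, D_2 \in S$. That each set is a subspace of $\mathcal{W}$ is immediate, since condition~(1) in each of the relevant definitions is precisely $D \circ \alpha = \alpha \circ D$, and the defining identities are linear in $D$ and in its auxiliary maps. For $\sigma$-invariance I would take $D$ in the degree-$k$ piece and show $\sigma(D) = \alpha \circ D$ lies in the degree-$(k+1)$ piece. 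The computation is uniform across the three cases: one applies $\alpha$ to the defining identity and pushes it inside every product by multiplicativity, $\alpha(\mu(a, b)) = \mu(\alpha(a), \alpha(b))$. For a generalized derivation $D$ with auxiliary maps $D', D''$ this produces the level-$(k+1)$ identity with auxiliary maps $\alpha \circ D'$ and $\alpha \circ D''$; for a quasiderivation only $D'$ is present and becomes $\alpha \circ D'$; for a centroid no auxiliary map is needed. The commuting relations guarantee the new maps again lie in $\mathcal{W}$.

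The essential step is closure under $\nu'$, and this is where I expect the real work. Fix $D_1 \in GDer_{\alpha^k}(V)$ with witnesses $D_1', D_1''$ and $D_2 \in GDer_{\alpha^s}(V)$ with witnesses $D_2', D_2''$. I would claim $\nu'(D_1, D_2) \in GDer_{\alpha^{k+s}}(V)$ with witnesses $\nu'(D_1', D_2')$ and $\nu'(D_1'', D_2'')$, and then verify
\[\mu(\nu'(D_1, D_2)(x), \alpha^{k+s}(y)) + \mu(\alpha^{k+s}(x), \nu'(D_1', D_2')(y)) = \nu'(D_1'', D_2'')(\mu(x, y)).\]
The verification expands $D_1 D_2 - D_2 D_1$ and applies the generalized-derivation identity for $D_1$ in the slot carrying its argument, then the identity for $D_2$, transporting powers of $\alpha$ through the maps with the commuting relations and multiplicativity. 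The main obstacle is the bookkeeping: besides the three desired terms, a collection of cross terms such as $\mu(\alpha^k D_2(x), \alpha^s D_1'(y))$ appears, and the crux is that each such term is exactly cancelled by its counterpart arising from the $D_2 D_1$ half of the bracket, while the residual right-slot pieces recombine into $-\mu(\alpha^{k+s}(x), \nu'(D_1', D_2')(y))$. The quasiderivation case is the same computation, now with $D_i$ itself in the right-hand slot and the single witness $D_i'$ on the right-hand side, so that $\nu'(D_1, D_2)$ has witness $\nu'(D_1', D_2')$.

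For $C(V)$ the bracket closure is cleaner, since a centroid behaves like a scalar on products. Writing the centroid relation $\mu(D_i(x), \alpha^{k_i}(y)) = D_i(\mu(x, y))$ for $i = 1, 2$ and composing, one gets $\mu(D_1 D_2(x), \alpha^{k+s}(y)) = D_1 D_2(\mu(x, y))$ and, with the roles of $D_1, D_2$ reversed, $\mu(D_2 D_1(x), \alpha^{k+s}(y)) = D_2 D_1(\mu(x, y))$; subtracting yields $\mu(\nu'(D_1, D_2)(x), \alpha^{k+s}(y)) = \nu'(D_1, D_2)(\mu(x, y))$, and the symmetric computation in the right-hand slot gives the remaining centroid equality. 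Hence $\nu'(D_1, D_2) \in C_{\alpha^{k+s}}(V)$, completing~(1).

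For~(2), I would first check $ZDer(V) \subseteq Der(V)$: if $D \in ZDer_{\alpha^k}(V)$, then $\mu(D(x), \alpha^k(y)) = 0$, and commutativity of $\mu$ gives $\mu(\alpha^k(x), D(y)) = \mu(D(y), \alpha^k(x)) = 0$ as well, so their sum is $0 = D(\mu(x, y))$, which is exactly the $\alpha^k$-derivation identity. It then remains to prove $\sigma(ZDer(V)) \subseteq ZDer(V)$ (the uniform argument of the first paragraph) and that $\nu'(D, E) \in ZDer(V)$ for $D \in ZDer_{\alpha^k}(V)$ and $E \in Der_{\alpha^s}(V)$. For the bracket, $\nu'(D, E)(\mu(x, y)) = 0$ because $D$ annihilates all products while $E$ sends $\mu(x, y)$ to a sum of products that $D$ again annihilates. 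For the remaining central condition one writes $\mu(\nu'(D, E)(x), \alpha^{k+s}(y)) = \mu(DE(x), \alpha^{k+s}(y)) - \mu(ED(x), \alpha^{k+s}(y))$; the first term vanishes since $D$ kills the left slot, and for the second I would apply the $\alpha^s$-derivation $E$ to the relation $\mu(D(x), \alpha^k(y)) = 0$, obtaining $\mu(ED(x), \alpha^{k+s}(y)) + \mu(\alpha^s D(x), \alpha^k E(y)) = 0$, where the second summand equals $\mu(D(\alpha^s(x)), \alpha^k(E(y)))$ after moving $\alpha^s$ past $D$ and hence vanishes. Thus $\mu(ED(x), \alpha^{k+s}(y)) = 0$, so $\nu'(D, E) \in ZDer_{\alpha^{k+s}}(V)$ and $ZDer(V)$ is a Hom-ideal of $Der(V)$.
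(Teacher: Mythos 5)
Your proposal is correct and follows essentially the same route as the paper: the same witnesses $\nu^{'}(D_{1}^{'}, D_{2}^{'})$ and $\nu^{'}(D_{1}^{''}, D_{2}^{''})$ for the bracket of generalized derivations with the same cross-term cancellation, the same composition argument for $C(V)$, and the same vanishing arguments for part (2) (your step of applying $E$ to $\mu(D(x), \alpha^{k}(y)) = 0$ is just the paper's expansion of $D_{2}(\mu(D_{1}(x), \alpha^{k}(y)))$ in disguise). No gaps worth noting.
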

\begin{proof}
(1). Suppose that $D_{1} \in GDer_{\alpha^{k}}(V)$, $D_{2} \in GDer_{\alpha^{s}}(V)$. Then for any $x, y \in V$,
\begin{align*}
&\mu(\sigma(D_{1})(x), \alpha^{k + 1}(y)) = \mu(\alpha \circ D_{1}(x), \alpha^{k + 1}(y)) = \alpha(\mu(D_{1}(x), \alpha^{k}(y)))\\
&= \alpha(D_{1}^{''}(\mu(x, y)) - \mu(\alpha^{k}(x), D_{1}^{'}(y))) = \sigma(D_{1}^{''})(\mu(x, y)) - \mu(\alpha^{k + 1}(x), \sigma(D_{1}^{'})(y)).
\end{align*}
Since $\sigma(D_{1}^{''}), \sigma(D_{1}^{'}) \in End(V)$, we have $\sigma(D_{1}) \in GDer_{\alpha^{k + 1}}(V)$.
\begin{align*}
&\mu(\nu^{'}(D_{1}, D_{2})(x), \alpha^{k + s}(y))\\
&= \mu(D_{1} \circ D_{2}(x), \alpha^{k + s}(y)) - \mu(D_{2} \circ D_{1}(x), \alpha^{k + s}(y))\\
&= D_{1}^{''}(\mu(D_{2}(x), \alpha^{s}(y))) - \mu(\alpha^{k}(D_{2}(x)), D_{1}^{'}(\alpha^{s}(y))) - D_{2}^{''}(\mu(D_{1}(x), \alpha^{k}(y))) +\\ &\mu(\alpha^{s}(D_{1}(x)), D_{2}^{'}(\alpha^{k}(y)))\\
&= D_{1}^{''}(D_{2}^{''}(\mu(x, y)) - \mu(\alpha^{s}(x), D_{2}^{'}(y))) - \mu(\alpha^{k}(D_{2}(x)), D_{1}^{'}(\alpha^{s}(y)))\\
&- D_{2}^{''}(D_{1}^{''}(\mu(x, y)) - \mu(\alpha^{k}(x), D_{1}^{'}(y))) + \mu(\alpha^{s}(D_{1}(x)), D_{2}^{'}(\alpha^{k}(y)))\\
&= D_{1}^{''} \circ D_{2}^{''}(\mu(x, y)) - D_{1}^{''}(\mu(\alpha^{s}(x), D_{2}^{'}(y))) - \mu(\alpha^{k}(D_{2}(x)), D_{1}^{'}(\alpha^{s}(y)))\\
&- D_{2}^{''} \circ D_{1}^{''}(\mu(x, y)) + D_{2}^{''}(\mu(\alpha^{k}(x), D_{1}^{'}(y))) + \mu(\alpha^{s}(D_{1}(x)), D_{2}^{'}(\alpha^{k}(y)))\\
&= D_{1}^{''} \circ D_{2}^{''}(\mu(x, y)) - \mu(D_{1}(\alpha^{s}(x)), \alpha^{k}(D_{2}^{'}(y))) - \mu(\alpha^{k + s}(x), D_{1}^{'}(D_{2}^{'}(y)))\\
&- \mu(\alpha^{k}(D_{2}(x)), D_{1}^{'}(\alpha^{s}(y))) - D_{2}^{''} \circ D_{1}^{''}(\mu(x, y)) + \mu(D_{2}(\alpha^{k}(x)), \alpha^{s}(D_{1}^{'}(y)))\\
&+ \mu(\alpha^{k + s}(x), D_{2}^{'}(D_{1}^{'}(y))) + \mu(\alpha^{s}(D_{1}(x)), D_{2}^{'}(\alpha^{k}(y)))\\
&= D_{1}^{''} \circ D_{2}^{''}(\mu(x, y)) - D_{2}^{''} \circ D_{1}^{''}(\mu(x, y)) - \mu(\alpha^{k + s}(x), D_{1}^{'}(D_{2}^{'}(y)))\\
&+ \mu(\alpha^{k + s}(x), D_{2}^{'}(D_{1}^{'}(y)))\\
&= \nu^{'}(D_{1}^{''}, D_{2}^{''})(\mu(x, y)) - \mu(\alpha^{k + s}(x), \nu^{'}(D_{1}^{'}, D_{2}^{'})(y)).
\end{align*}
Since $\nu^{'}(D_{1}^{''}, D_{2}^{''}), \nu^{'}(D_{1}^{'}, D_{2}^{'}) \in End(V)$, we have $\nu^{'}(D_{1}, D_{2}) \in GDer_{\alpha^{k + s}}(V)$.

Therefore, $GDer(V)$ is a Hom-subalgebra of $(\mathcal{W}, \nu^{'}, \sigma)$.

Similarly, we have $QDer(V)$ is a Hom-subalgebra of $(\mathcal{W}, \nu^{'}, \sigma)$.

Suppose that $D_{1} \in C_{\alpha^{k}}(V)$, $D_{2} \in C_{\alpha^{s}}(V)$. Then for any $x, y \in V$,
\[\sigma(D_{1})(\mu(x, y)) = \alpha \circ D_{1}(\mu(x, y)) = \alpha(\mu(D_{1}(x), \alpha^{k}(y))) = \mu(\sigma(D_{1})(x), \alpha^{k + 1}(y)).\]
Similarly, we have $\sigma(D_{1})(\mu(x, y)) = \mu(\alpha^{k + 1}(x), \sigma(D_{1})(y))$. Hence, we have $\sigma(D_{1}) \in C_{\alpha^{k + 1}}(V)$.
\begin{align*}
&\nu^{'}(D_{1}, D_{2})(\mu(x, y))\\
&= D_{1} \circ D_{2}(\mu(x, y)) - D_{2} \circ D_{1}(\mu(x, y))\\
&= D_{1}(\mu(D_{2}(x), \alpha^{s}(y))) - D_{2}(\mu(D_{1}(x), \alpha^{k}(y)))\\
&= \mu(D_{1}(D_{2}(x)), \alpha^{k + s}(y)) - \mu(D_{2}(D_{1}(x)), \alpha^{k + s}(y))\\
&= \mu(\nu^{'}(D_{1}, D_{2})(x), \alpha^{k + s}(y)).
\end{align*}
Similarly, we have $\nu^{'}(D_{1}, D_{2})(\mu(x, y)) = \mu(\alpha^{k + s}(x), \nu^{'}(D_{1}, D_{2})(y))$.

Hence, we have $\nu^{'}(D_{1}, D_{2}) \in C_{\alpha^{k + s}}(V)$.

Therefore, we have $C(V)$ is a Hom-subalgebras of $(\mathcal{W}, \nu^{'}, \sigma)$.

(2). Suppose that $D_{1} \in ZDer_{\alpha^{k}}(V)$, $D_{2} \in Der_{\alpha^{s}}(V)$. Then for any $x, y \in V$,
\[\sigma(D_{1})(\mu(x, y)) = \alpha \circ D_{1}(\mu(x, y)) = 0.\]
\[\sigma(D_{1})(\mu(x, y)) = \alpha \circ D_{1}(\mu(x, y)) = \alpha(\mu(D_{1}(x), \alpha^{k}(y))) = \mu(\sigma(D_{1})(x), \alpha^{k + 1}(y)).\]
Hence, $\sigma(D_{1}) \in ZDer_{\alpha^{k + 1}}(V)$.
\begin{align*}
&\nu^{'}(D_{1}, D_{2})(\mu(x, y))\\
&= D_{1} \circ D_{2}(\mu(x, y)) - D_{2} \circ D_{1}(\mu(x, y))\\
&= D_{1}(\mu(D_{2}(x), \alpha^{s}(y)) + \mu(\alpha^{s}(x), D_{2}(y))) = 0.
\end{align*}
\begin{align*}
&\nu^{'}(D_{1}, D_{2})(\mu(x, y))\\
&= D_{1} \circ D_{2}(\mu(x, y)) - D_{2} \circ D_{1}(\mu(x, y))\\
&= D_{1}(\mu(D_{2}(x), \alpha^{s}(y)) + \mu(\alpha^{s}(x), D_{2}(y))) - D_{2}(\mu(D_{1}(x), \alpha^{k}(y)))\\
&= \mu(D_{1}(D_{2}(x)), \alpha^{k + s}(y)) + \mu(D_{1}(\alpha^{s}(x)), \alpha^{k}(D_{2}(y))) - \mu(D_{2}(D_{1}(x)), \alpha^{k + s}(y))\\
&- \mu(\alpha^{s}(D_{1}(x)), D_{2}(\alpha^{k}(y)))\\
&= \mu(\nu^{'}(D_{1}, D_{2})(x), \alpha^{k + s}(y)).
\end{align*}
Hence, we have $\nu^{'}(D_{1}, D_{2}) \in ZDer_{\alpha^{k + s}}(V)$.

Therefore, $ZDer(V)$ is a Hom-ideal of $Der(V)$.
\end{proof}
\begin{prop}\label{prop:3.2}
Let $(V, \mu, \alpha)$ be a multiplicative Hom-Jordan algebra, then
\begin{enumerate}[(1)]
\item $\nu^{'}(Der(V), C(V)) \subseteq C(V)$;
\item $\nu^{'}(QDer(V), QC(V)) \subseteq QC(V)$;
\item $\nu^{'}(QC(V), QC(V)) \subseteq QDer(V)$;
\item $C(V) \subseteq QDer(V)$;
\item $QDer(V) + QC(V) \subseteq GDer(V)$;
\item $C(V) \circ Der(V) \subseteq Der(V)$.
\end{enumerate}
\end{prop}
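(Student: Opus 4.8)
The plan is to prove all six inclusions by direct verification on homogeneous components. Every space occurring here is a graded direct sum indexed by the power $k$ of $\alpha$, and the bracket $\nu'$ sends an $\alpha^{k}$-object and an $\alpha^{s}$-object to an $\alpha^{k+s}$-object (as in Lemma \ref{le:2.9}), so it suffices to fix $D_{1}$ of degree $k$ and $D_{2}$ of degree $s$ and check the defining identity of the target class for $\nu'(D_{1}, D_{2})$, for $D_{1}\circ D_{2}$, or for $D_{1}+D_{2}$, as appropriate. The only tools I will use are the defining relations of each class, the commutation $D\circ\alpha = \alpha\circ D$ (which iterates to $D\circ\alpha^{j} = \alpha^{j}\circ D$), multiplicativity $\alpha(\mu(u,v)) = \mu(\alpha(u),\alpha(v))$, and the commutativity of $\mu$. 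In each case the verification that $\nu'(D_{1},D_{2})\circ\alpha = \alpha\circ\nu'(D_{1},D_{2})$ is immediate, so I will concentrate on the multiplicative identity.

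I would dispatch the three soft inclusions first. For (4), a centroid $D\in C_{\alpha^{k}}(V)$ satisfies $\mu(D(x),\alpha^{k}(y)) = \mu(\alpha^{k}(x),D(y)) = D(\mu(x,y))$, so adding the first two expressions gives the quasiderivation identity with associated map $2D$; hence $C(V)\subseteq QDer(V)$. For (5), I would note that $QDer(V)\subseteq GDer(V)$ (a quasiderivation $D$ with witness $\widetilde{D}$ is a generalized derivation whose two auxiliary maps are $D$ and $\widetilde{D}$) and that $QC(V)\subseteq GDer(V)$ (a quasicentroid $D$ is a generalized derivation with auxiliary maps $-D$ and $0$); since $GDer(V)$ is a subspace by Proposition \ref{prop:3.1}, the sum $QDer(V)+QC(V)$ lies inside it. For (6), given $D_{1}\in C_{\alpha^{k}}(V)$ and $D_{2}\in Der_{\alpha^{s}}(V)$, I would expand $D_{1}D_{2}(\mu(x,y))$ by the derivation rule for $D_{2}$ and then push $D_{1}$ through each summand using the two halves of the centroid identity, landing exactly on the derivation identity for $D_{1}D_{2}$ at level $k+s$.

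The remaining three parts require genuine commutator bookkeeping. For (1), with $D_{1}\in Der_{\alpha^{k}}(V)$ and $D_{2}\in C_{\alpha^{s}}(V)$, I would compute $\mu(\nu'(D_{1},D_{2})(x),\alpha^{k+s}(y))$ by expanding the two composites: on $\mu(D_{1}D_{2}(x),\alpha^{k+s}(y))$ apply the derivation rule for $D_{1}$ and then the centroid rule for $D_{2}$, and on $\mu(D_{2}D_{1}(x),\alpha^{k+s}(y))$ apply the centroid rule for $D_{2}$ and then the derivation rule for $D_{1}$; the two mixed contributions of the form $D_{2}(\mu(\alpha^{k}(x),D_{1}(y)))$ cancel, leaving precisely $\nu'(D_{1},D_{2})(\mu(x,y))$, which is the first half of the centroid identity. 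The second half, $\mu(\alpha^{k+s}(x),\nu'(D_{1},D_{2})(y)) = \nu'(D_{1},D_{2})(\mu(x,y))$, I would then obtain for free by swapping $x$ and $y$ in the identity just proved and invoking commutativity of $\mu$. For (2), with $D_{1}\in QDer_{\alpha^{k}}(V)$ (witness $D_{1}'$) and $D_{2}\in QC_{\alpha^{s}}(V)$, the same expansion produces two copies of a $D_{1}'$-term which, after rewriting one of them via the quasicentroid symmetry of $D_{2}$, coincide and cancel; what survives is exactly $\mu(\alpha^{k+s}(x),\nu'(D_{1},D_{2})(y))$, i.e. the single quasicentroid equation. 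For (3), with $D_{1}\in QC_{\alpha^{k}}(V)$ and $D_{2}\in QC_{\alpha^{s}}(V)$, pushing each quasicentroid past the other yields $\mu(\nu'(D_{1},D_{2})(x),\alpha^{k+s}(y)) = -\mu(\alpha^{k+s}(x),\nu'(D_{1},D_{2})(y))$, so the two sides of the quasiderivation identity sum to $0$ and $\nu'(D_{1},D_{2})$ is a quasiderivation with witness $D' = 0$.

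I expect the main obstacle to be the bookkeeping in (1) and (2). The commutator expansions generate several terms in which one must correctly commute powers of $\alpha$ past $D_{1}$ and $D_{2}$ and repeatedly decide whether to apply the derivation or centroid rule from the left factor or from the right; the computation closes only because certain auxiliary terms cancel in pairs — the mixed cross-terms $D_{2}(\mu(\alpha^{k}(x),D_{1}(y)))$ in (1) and the two $D_{1}'$-terms in (2) — and seeing the correct pairing, and in (2) using the quasicentroid symmetry to force that cancellation, is where the care is needed. Everything else is formal.
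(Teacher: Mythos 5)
Your proposal is correct and takes essentially the same route as the paper: componentwise verification of the defining identities on $\alpha^{k}$-homogeneous elements, with exactly the same key cancellations (the mixed terms $D_{2}(\mu(\alpha^{k}(x),D_{1}(y)))$ in (1), the $D_{1}^{'}$-terms via quasicentroid symmetry in (2), the sign flip giving witness $0$ in (3), and witness $2D$ in (4)). The only cosmetic differences are the direction in which you expand in (1)--(2) and that in (5) you embed $QDer(V)$ and $QC(V)$ separately into $GDer(V)$ and invoke linearity, whereas the paper verifies the generalized-derivation identity directly for the sum $D_{1}+D_{2}$ with auxiliary maps $D_{1}-D_{2}$ and $D_{1}^{'}$.
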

\begin{proof}
(1). Suppose that $D_{1} \in Der_{\alpha^{k}}(V)$, $D_{2} \in C_{\alpha^{s}}(V)$. Then for any $x, y \in V$, we have
\begin{align*}
&\nu^{'}(D_{1}, D_{2})(\mu(x, y))\\
&= D_{1} \circ D_{2}(\mu(x, y)) - D_{2} \circ D_{1}(\mu(x, y))\\
&= D_{1}(\mu(D_{2}(x), \alpha^{s}(y))) - D_{2}(\mu(D_{1}(x), \alpha^{k}(y)) + \mu(\alpha^{k}(x), D_{1}(y)))\\
&= \mu(D_{1}(D_{2}(x)), \alpha^{k + s}(y)) + \mu(\alpha^{k}(D_{2}(x)), D_{1}(\alpha^{s}(y))) - \mu(D_{2}(D_{1}(x)), \alpha^{k + s}(y))\\
&- \mu(D_{2}(\alpha^{k}(x)), \alpha^{s}(D_{1}(y)))\\
&= \mu(\nu^{'}(D_{1}, D_{2})(x), \alpha^{k + s}(y)).
\end{align*}
Similarly, we have $\nu^{'}(D_{1}, D_{2})(\mu(x, y)) = \mu(\alpha^{k + s}(x), \nu^{'}(D_{1}, D_{2})(y))$. Hence, $\nu^{'}(D_{1}, D_{2}) \in C_{\alpha^{k + s}}(V)$. Therefore $\nu^{'}(Der(V), C(V)) \subseteq C(V)$.

(2). Suppose that $D_{1} \in QDer_{\alpha^{k}}(V)$, $D_{2} \in QC_{\alpha^{s}}(V)$. Then for any $x, y \in V$, we have
\begin{align*}
&\mu(\nu^{'}(D_{1}, D_{2})(x), \alpha^{k + s}(y))\\
&= \mu(D_{1} \circ D_{2}(x), \alpha^{k + s}(y)) - \mu(D_{2} \circ D_{1}(x), \alpha^{k + s}(y))\\
&= D_{1}^{'}(\mu(D_{2}(x), \alpha^{s}(y))) - \mu(\alpha^{k}(D_{2}(x)), D_{1}(\alpha^{s}(y))) - \mu(\alpha^{s}(D_{1}(x)), D_{2}(\alpha^{k}(y)))\\
&= D_{1}^{'}(\mu(\alpha^{s}(x), D_{2}(y))) - \mu(\alpha^{k}(D_{2}(x)), D_{1}(\alpha^{s}(y))) - \mu(\alpha^{s}(D_{1}(x)), D_{2}(\alpha^{k}(y)))\\
&= \mu(D_{1}(\alpha^{s}(x)), \alpha^{k}(D_{2}(y))) + \mu(\alpha^{k + s}(x), D_{1}(D_{2}(y))) - \mu(\alpha^{k}(D_{2}(x)), D_{1}(\alpha^{s}(y)))\\
&- \mu(\alpha^{s}(D_{1}(x)), D_{2}(\alpha^{k}(y)))\\
&= \mu(\alpha^{k + s}(x), D_{1}(D_{2}(y))) - \mu(\alpha^{k}(D_{2}(x)), D_{1}(\alpha^{s}(y)))\\
&= \mu(\alpha^{k + s}(x), D_{1}(D_{2}(y))) - \mu(D_{2}(\alpha^{k}(x)), \alpha^{s}(D_{1}(y)))\\
&= \mu(\alpha^{k + s}(x), D_{1}(D_{2}(y))) - \mu(\alpha^{k + s}(x), D_{2}(D_{1}(y)))\\
&= \mu(\alpha^{k + s}(x), \nu^{'}(D_{1}, D_{2})(y)).
\end{align*}
Hence, we have $\nu^{'}(D_{1}, D_{2}) \in QC_{\alpha^{k + s}}(V)$. So $\nu^{'}(QDer(V), QC(V)) \subseteq QC(V)$.

(3). Suppose that $D_{1} \in QC_{\alpha^{k}}(V)$, $D_{2} \in QC_{\alpha^{s}}(V)$. Then for any $x, y \in V$, we have
\begin{align*}
&\mu(\nu^{'}(D_{1}, D_{2})(x), \alpha^{k + s}(y))\\
&= \mu(D_{1} \circ D_{2}(x), \alpha^{k + s}(y)) - \mu(D_{2} \circ D_{1}(x), \alpha^{k + s}(y))\\
&= \mu(\alpha^{k}(D_{2}(x)), D_{1}(\alpha^{s}(y))) - \mu(\alpha^{s}(D_{1}(x)), D_{2}(\alpha^{k}(y)))\\
&= \mu(D_{2}(\alpha^{k}(x)), \alpha^{s}(D_{1}(y))) - \mu(D_{1}(\alpha^{s}(x)), \alpha^{k}(D_{2}(y)))\\
&= \mu(\alpha^{k + s}(x), D_{2}(D_{1}(y))) - \mu(\alpha^{k + s}(x), D_{1}(D_{2}(y)))\\
&= -\mu(\alpha^{k + s}(x), \nu^{'}(D_{1}, D_{2})(y)),
\end{align*}
i.e., $\mu(\nu^{'}(D_{1}, D_{2})(x), \alpha^{k + s}(y)) + \mu(\alpha^{k + s}(x), \nu^{'}(D_{1}, D_{2})(y)) = 0$.

Hence, we have $\nu^{'}(D_{1}, D_{2}) \in QDer_{\alpha^{k + s}}(V)$, which implies that $\nu^{'}(QC(V), QC(V)) \subseteq QDer(V)$.

(4). Suppose that $D \in C_{\alpha^{k}}(V)$. Then for any $x, y \in V$, we have
\[D(\mu(x, y)) = \mu(D(x), \alpha^{k}(y)) = \mu(\alpha^{k}(x), D(y)).\]

Hence, we have
\[\mu(D(x), \alpha^{k}(y)) + \mu(\alpha^{k}(x), D(y)) = 2D(\mu(x, y)),\]
which implies that $D \in QDer_{\alpha^{k}}(V)$. So $C(V) \subseteq QDer(V)$.

(5). Suppose that $D_{1} \in QDer_{\alpha^{k}}(V)$, $D_{2} \in QC_{\alpha^{k}}(V)$. Then for any $x, y \in V$, we have
\begin{align*}
&\mu((D_{1} + D_{2})(x), \alpha^{k}(y))\\
&= \mu(D_{1}(x), \alpha^{k}(y)) + \mu(D_{2}(x), \alpha^{k}(y))\\
&= D_{1}^{'}(\mu(x, y)) - \mu(\alpha^{k}(x), D_{1}(y)) + \mu(\alpha^{k}(x), D_{2}(y))\\
&= D_{1}^{'}(\mu(x, y)) - \mu(\alpha^{k}(x), (D_{1} - D_{2})(y)).
\end{align*}
Since $D_{1}^{'}$, $D_{1} - D_{2} \in End(V)$, we have $D_{1} + D_{2} \in GDer_{\alpha^{k}}(V)$. So $QDer(V) + QC(V) \subseteq GDer(V)$.

(6). Suppose that $D_{1} \in C_{\alpha^{k}}(V)$, $D_{2} \in Der_{\alpha^{s}}(V)$. Then for any $x, y \in V$, we have
\begin{align*}
&D_{1} \circ D_{2}(\mu(x, y))\\
&= D_{1}(\mu(D_{2}(x), \alpha^{s}(y)) + \mu(\alpha^{s}(x), D_{2}(y)))\\
&= \mu(D_{1}(D_{2}(x)), \alpha^{k + s}(y)) + \mu(\alpha^{k + s}(x), D_{1}(D_{2}(y))),
\end{align*}
which implies that $D_{1} \circ D_{2} \in Der_{\alpha^{k + s}}(V)$. So $C(V) \circ Der(V) \subseteq Der(V)$.
\end{proof}
\begin{thm}
Suppose that $(V, \mu, \alpha)$ is a multiplicative Hom-Jordan algebra. Then $GDer(V) = QDer(V) + QC(V)$.
\end{thm}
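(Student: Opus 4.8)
The plan is to prove the equality $GDer(V) = QDer(V) + QC(V)$ by establishing the two inclusions separately. Since Proposition \ref{prop:3.2}(5) already gives $QDer(V) + QC(V) \subseteq GDer(V)$, the only work remaining is the reverse inclusion $GDer(V) \subseteq QDer(V) + QC(V)$. So I would open the proof by invoking Proposition \ref{prop:3.2}(5) for one direction in a single sentence, and then devote the argument to showing that every generalized derivation decomposes as a sum of a quasiderivation and a quasicentroid.

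For the nontrivial inclusion, it suffices to work graded: take $D \in GDer_{\alpha^{k}}(V)$, since $GDer(V) = \dotplus_{k \geq 0} GDer_{\alpha^{k}}(V)$ and both $QDer(V)$ and $QC(V)$ are graded as well. By definition there exist $D^{'}, D^{''} \in End(V)$ commuting with $\alpha$ such that $\mu(D(x), \alpha^{k}(y)) + \mu(\alpha^{k}(x), D^{'}(y)) = D^{''}(\mu(x, y))$ for all $x, y \in V$. The natural candidate decomposition, by analogy with the Lie-algebraic case in \cite{L2} and \cite{Z2}, is to set $D_{1} = \tfrac{1}{2}(D + D^{'})$ and $D_{2} = \tfrac{1}{2}(D - D^{'})$, so that $D = D_{1} + D_{2}$. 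I would then verify that $D_{1} \in QDer_{\alpha^{k}}(V)$ and $D_{2} \in QC_{\alpha^{k}}(V)$.

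The key computation is to exploit the commutativity of $\mu$. Writing the defining identity for $D$ both as stated and with the roles of $x$ and $y$ interchanged, and using $\mu(u, v) = \mu(v, u)$, I would obtain a second relation $\mu(D^{'}(x), \alpha^{k}(y)) + \mu(\alpha^{k}(x), D(y)) = D^{''}(\mu(x, y))$. Adding and subtracting these two identities then yields
\[
\mu(D_{1}(x), \alpha^{k}(y)) + \mu(\alpha^{k}(x), D_{1}(y)) = D^{''}(\mu(x, y)),
\]
which shows $D_{1} \in QDer_{\alpha^{k}}(V)$ with associated map $D^{''}$, and
\[
\mu(D_{2}(x), \alpha^{k}(y)) = \mu(\alpha^{k}(x), D_{2}(y)),
\]
which shows $D_{2} \in QC_{\alpha^{k}}(V)$. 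Both $D_{1}$ and $D_{2}$ commute with $\alpha$ since $D, D^{'}$ do, so the required conditions of Definitions \ref{defn:2.11} and the quasicentroid definition are met.

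I expect the main obstacle to be purely bookkeeping rather than conceptual: one must be careful that the symmetrized identity obtained by swapping $x$ and $y$ is legitimate, which relies on the commutativity of $\mu$ built into the Hom-Jordan structure, and that the factor of $\tfrac{1}{2}$ is available, i.e. that $\operatorname{char}\F \neq 2$. Assuming the base field has characteristic zero (or at least not $2$), as is standard for Jordan-type algebras, the decomposition goes through cleanly and the two inclusions combine to give the stated equality.
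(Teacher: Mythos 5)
Your proposal is correct and follows essentially the same route as the paper: invoke Proposition \ref{prop:3.2}(5) for one inclusion, then for $D \in GDer_{\alpha^{k}}(V)$ use commutativity of $\mu$ to get the swapped identity and decompose $D = \frac{D + D^{'}}{2} + \frac{D - D^{'}}{2}$ into a quasiderivation (with associated map $D^{''}$) plus a quasicentroid. Your explicit remark that the factor $\frac{1}{2}$ requires $\operatorname{char}\F \neq 2$ is a point the paper uses silently but does not state.
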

\begin{proof}
According to Proposition \ref{prop:3.2} (5), we need only to show that $GDer(V) \subseteq QDer(V) + QC(V)$.

Suppose that $D \in GDer_{\alpha^{k}}(V)$. Then there exist $D^{'},\; D^{''} \in End(V)$ such that
\[D^{''}(\mu(x, y)) = \mu(D(x), \alpha^{k}(y)) + \mu(\alpha^{k}(x), D^{'}(y)),\quad\forall x, y \in V.\]

Since $\mu$ is commutative, we have
\[D^{''}(\mu(y, x)) = \mu(D^{'}(y), \alpha^{k}(x)) + \mu(\alpha^{k}(y), D(x)),\quad\forall x, y \in V,\]
which implies that $D^{'} \in GDer_{\alpha^{k}}(V)$.

Moreover, we have
\begin{align*}
&\mu\left(\frac{D + D^{'}}{2}(x), \alpha^{k}(y)\right) + \mu\left(\alpha^{k}(x), \frac{D + D^{'}}{2}(y)\right)\\
&= \frac{1}{2}(\mu(D(x), \alpha^{k}(y)) + \mu(D^{'}(x), \alpha^{k}(y)) + \mu(\alpha^{k}(x), D(y)) + \mu(\alpha^{k}(x), D^{'}(y)))\\
&= D^{''}(\mu(x, y)),
\end{align*}
which implies $\frac{D + D^{'}}{2} \in QDer_{\alpha^{k}}(V)$.
\begin{align*}
&\mu\left(\frac{D - D^{'}}{2}(x), \alpha^{k}(y)\right)\\
&= \frac{1}{2}(\mu(D(x), \alpha^{k}(y)) - \mu(D^{'}(x), \alpha^{k}(y)))\\
&= \frac{1}{2}(D^{''}(\mu(x, y)) - \mu(\alpha^{k}(x), D^{'}(y)) - D^{''}(\mu(x, y)) + \mu(\alpha^{k}(x), D(y)))\\
&= \mu\left(\alpha^{k}(x), \frac{D - D^{'}}{2}(y)\right),
\end{align*}
which implies that $\frac{D - D^{'}}{2} \in QC_{\alpha^{k}}(V)$.

Hence, $D = \frac{D + D^{'}}{2} + \frac{D - D^{'}}{2} \in QDer(V) + QC(V)$, i.e., $GDer(V) \subseteq QDer(V) + QC(V)$.

Therefore, $GDer(V) = QDer(V) + QC(V)$.
\end{proof}
\begin{prop}
Suppose that $(V, \mu, \alpha)$ is a multiplicative Hom-Jordan algebra where $V$ can be decomposed into the direct sum of two Hom-ideals, i.e., $V = V_{1} \oplus V_{2}$ and $\alpha$ is surjective. Then we have
\begin{enumerate}[(1)]
\item $Z(V) = Z(V_{1}) \oplus Z(V_{2})$;
\item If\; $Z(V) = \{0\}$, then we have
      \begin{enumerate}[(a)]
      \item $Der(V) = Der(V_{1}) \oplus Der(V_{2})$;
      \item $GDer(V) = GDer(V_{1}) \oplus GDer(V_{2})$;
      \item $QDer(V) = QDer(V_{1}) \oplus QDer(V_{2})$;
      \item $C(V) = C(V_{1}) \oplus C(V_{2})$.
      \end{enumerate}
\end{enumerate}
\end{prop}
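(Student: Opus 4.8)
The plan is to reduce all four identities in part (2) to a single block-diagonality property and to prove (1) directly. First I would record the structural facts forced by the hypotheses. Since $V_{1}$ and $V_{2}$ are Hom-ideals with $V_{1} \cap V_{2} = \{0\}$, for $x_{1} \in V_{1}$ and $x_{2} \in V_{2}$ the product $\mu(x_{1}, x_{2})$ lies in $V_{1} \cap V_{2}$, so $\mu(V_{1}, V_{2}) = \{0\}$. Being Hom-ideals also gives $\alpha(V_{i}) \subseteq V_{i}$; together with the surjectivity of $\alpha$ on $V$ and the directness of the sum this forces $\alpha$ to restrict to a surjection $\alpha|_{V_{i}} : V_{i} \rightarrow V_{i}$, and hence $\alpha^{k}|_{V_{i}}$ is surjective as well. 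I will also use the projections $\pi_{i} : V \rightarrow V_{i}$, which commute with $\alpha$ because $\alpha$ preserves each summand.

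For (1), writing $x = x_{1} + x_{2}$ and $y = y_{1} + y_{2}$ and using $\mu(V_{1}, V_{2}) = \{0\}$ gives $\mu(x, y) = \mu(x_{1}, y_{1}) + \mu(x_{2}, y_{2})$ with the two terms in $V_{1}$ and $V_{2}$ respectively. Hence $\mu(x, y) = 0$ for all $y \in V$ if and only if $\mu(x_{1}, y_{1}) = 0$ for all $y_{1} \in V_{1}$ and $\mu(x_{2}, y_{2}) = 0$ for all $y_{2} \in V_{2}$ (the mixed products vanish automatically), which is exactly $x_{1} \in Z(V_{1})$ and $x_{2} \in Z(V_{2})$. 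This yields $Z(V) = Z(V_{1}) \oplus Z(V_{2})$.

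For part (2) the inclusions $\supseteq$ are routine in every case: given a map on $V_{i}$ in the relevant class (with its auxiliary maps $D'$, $D''$ where applicable), extend it by zero to $V$ and check the defining identity, noting that all mixed products drop out by $\mu(V_{1}, V_{2}) = \{0\}$. The content is in the reverse inclusions, and these all follow from one lemma: \emph{if $D$ lies in any of $Der_{\alpha^{k}}(V)$, $GDer_{\alpha^{k}}(V)$, $QDer_{\alpha^{k}}(V)$, $C_{\alpha^{k}}(V)$, then $D(V_{i}) \subseteq V_{i}$.} To see this for $V_{1}$, take $x_{1} \in V_{1}$ and $y_{2} \in V_{2}$, so that $\mu(x_{1}, y_{2}) = 0$, and substitute into the defining identity. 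The right-hand side is the relevant map applied to $0$, hence is $0$; on the left-hand side the surviving products lie in $V_{2}$ and $V_{1}$ respectively, so each vanishes separately. The $V_{2}$-component reads $\mu(\pi_{2}D(x_{1}), \alpha^{k}(y_{2})) = 0$ for all $y_{2} \in V_{2}$. Since $\alpha^{k}|_{V_{2}}$ is onto $V_{2}$ and $\pi_{2}D(x_{1}) \in V_{2}$ annihilates $V_{1}$ automatically, we get $\pi_{2}D(x_{1}) \in Z(V) = \{0\}$, i.e.\ $D(x_{1}) \in V_{1}$; the same argument gives $D(V_{2}) \subseteq V_{2}$.

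Once block-diagonality is established I would set $D_{i} = D|_{V_{i}}$ and, in the generalized and quasi cases, take the auxiliary maps on $V_{i}$ to be $\pi_{i} \circ D'|_{V_{i}}$ (and $\pi_{i} \circ D''|_{V_{i}}$). These commute with $\alpha$ since $\pi_{i}$, $D'$ and $D''$ do, and they reproduce the defining identity on $V_{i}$ because the relevant products already lie in $V_{i}$; thus $D_{i}$ belongs to the appropriate class for $V_{i}$ and $D = D_{1} \oplus D_{2}$. Directness of the sum is immediate, as the two extensions-by-zero are supported on complementary summands. The step I expect to be the main obstacle is the block-diagonality lemma: it is the only place where both hypotheses $Z(V) = \{0\}$ and the surjectivity of $\alpha$ are genuinely needed, and one must justify carefully that the left-hand side of each defining identity splits into its $V_{1}$- and $V_{2}$-components, which is legitimate precisely because $\alpha^{k}$ and $D$ preserve each summand and $\mu(V_{1}, V_{2}) = \{0\}$.
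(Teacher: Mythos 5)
Your proof is correct and follows essentially the same route as the paper's: your block-diagonality lemma (mixed arguments $x_{1} \in V_{1}$, $y_{2} \in V_{2}$, together with $Z(V) = \{0\}$ and the surjectivity of $\alpha$) is exactly the paper's Step 1, and your extension-by-zero and restriction-with-projected-auxiliary-maps arguments are its Steps 2--3, the only real difference being that you handle all four classes with one uniform lemma where the paper writes out $Der(V)$ in detail and declares the other cases ``similar''. The one point you omit is the paper's Step 4, which verifies that $Der(V_{1})$ and $Der(V_{2})$ are Hom-ideals of $Der(V)$, so that $\oplus$ can be read as a direct sum of Hom-ideals rather than merely of subspaces; if the statement is interpreted that way, you should append that (routine) verification.
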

\begin{proof}
(1). Obviously, $Z(V_{1}) \cap Z(V_{2}) = \{0\}$. And it's easy to show that $Z(V_{i})(i = 1, 2)$ are Hom-ideals of $Z(V)$. For all $z_{i} \in Z(V_{i})(i = 1, 2)$, take $x = x_{1} + x_{2}$ where $x_{1} \in J_{1}$, $x_{2} \in J_{2}$. We have
\[\mu(z_{1} + z_{2}, x) = \mu(z_{1} + z_{2}, x_{1} + x_{2}) = \mu(z_{1}, x_{1}) + \mu(z_{2}, x_{2}),\]
since $z_{i} \in Z(V_{i})(i = 1, 2)$, we have
\[\mu(z_{i}, x_{i}) = 0,\; i = 1, 2.\]
Hence,
\[\mu(z_{1} + z_{2}, x) = 0,\]
which implies that $z_{1} + z_{2} \in Z(V)$, i.e., $Z(V_{1}) \oplus Z(V_{2}) \subseteq Z(V)$.

On the other hand, for all $a \in Z(V)$, suppose that $a = a_{1} + a_{2}$ where $a_{i} \in V_{i}(i = 1, 2)$. Then for all $x_{1} \in V_{1}$,
\[\mu(a_{1}, x_{1}) = \mu(a - a_{2}, x_{1}) = \mu(a, x_{1})\]
since $a \in Z(V)$, we have
\[\mu(a, x_{1}) = 0.\]
Hence,
\[\mu(a_{1}, x_{1}) = 0,\]
which implies that $a_{1} \in Z(V_{1})$. Similarly, we have $a_{2} \in Z(V_{2})$. Hence, $Z(V) \subseteq Z(V_{1}) \oplus Z(V_{2})$.

Therefore, we have $Z(V) = Z(V_{1}) \oplus Z(V_{2})$.

(2). $\bf{Step 1}$. We'll show that $\forall i = 1, 2$, $D(V_{i}) \subseteq V_{i},\quad\forall D \in Der_{\alpha^{k}}(V)(k \geq 0)$.

Suppose that $x_{i} \in V_{i}$, then
\[\mu(D(x_{1}), \alpha^{k}(x_{2})) = D(\mu(x_{1}, x_{2})) - \mu(\alpha^{k}(x_{1}), D(x_{2})) \in V_{1} \cap V_{2} = 0,\]
since $V_{1}$, $V_{2}$ are Hom-ideals of $(V, \mu, \alpha)$.

Suppose that $D(x_{1}) = u_{1} + u_{2}$ where $u_{1} \in V_{1}$, $u_{2} \in V_{2}$. Then
\[\mu(u_{2}, \alpha^{k}(x_{2})) = \mu(u_{1} + u_{2}, \alpha^{k}(x_{2})) = \mu(D(x_{1}), \alpha^{k}(x_{2})) = 0,\]
which implies that $u_{2} \in Z(V_{2})$ since $\alpha$ is surjective. Note that $Z(V) = \{0\}$, we have $Z(V_{i}) = \{0\}(i = 1, 2)$. Hence, $u_{2} = 0$. That is to say $D(x_{1}) \in V_{1}$. Similarly, we have $D(x_{2}) \in V_{2}$.

$\bf{Step 2}$. We'll show that $Der_{\alpha^{k}}(V_{1}) \dotplus Der_{\alpha^{k}}(V_{2}) \subseteq Der_{\alpha^{k}}(V)(k \geq 0)$.

For any $D \in Der_{\alpha^{k}}(V_{1})$, we extend it to a linear map on $V$ as follow
\[D(x_{1} + x_{2}) = D(x_{1}),\quad\forall x_{1} \in V_{1}, x_{2} \in V_{2}.\]

Then for any $x, y \in V$, suppose that $x = x_{1} + x_{2}$, $y = y_{1} + y_{2} \in V$, where $x_{1}, y_{1} \in V_{1}$, $x_{2}, y_{2} \in V_{2}$, we have
\[D(\mu(x, y)) = D(\mu(x_{1} + x_{2}, y_{1} + y_{2})) = D(\mu(x_{1}, y_{1}) + \mu(x_{2}, y_{2})) = D(\mu(x_{1}, y_{1})),\]
\begin{align*}
&\mu(D(x), \alpha^{k}(y)) + \mu(\alpha^{k}(x), D(y))\\
&= \mu(D(x_{1} + x_{2}), \alpha^{k}(y_{1} + y_{2})) + \mu(\alpha^{k}(x_{1} + x_{2}), D(y_{1} + y_{2}))\\
&= \mu(D(x_{1}), \alpha^{k}(y_{1})) + \mu(\alpha^{k}(x_{1}), D(y_{1})),
\end{align*}
since $D \in Der_{\alpha^{k}}(V_{1})$,
\[D(\mu(x_{1}, y_{1})) = \mu(D(x_{1}), \alpha^{k}(y_{1})) + \mu(\alpha^{k}(x_{1}), D(y_{1})),\]
we have
\[D(\mu(x, y)) = \mu(D(x), \alpha^{k}(y)) + \mu(\alpha^{k}(x), D(y)),\]
which implies that $D \in Der_{\alpha^{k}}(V)$, i.e., $Der_{\alpha^{k}}(V_{1}) \subseteq Der_{\alpha^{k}}(V)$. Moreover, $D \in Der_{\alpha^{k}}(V_{1})$ if and only if $D(x_{2}) = 0,\; \forall x_{2} \in V_{2}$.

Similarly, we have $Der_{\alpha^{k}}(V_{2}) \subseteq Der_{\alpha^{k}}(V)$ and $D \in Der_{\alpha^{k}}(V_{2})$ if and only if $D(x_{1}) = 0,\; \forall x_{1} \in V_{1}$.

Then we have $Der_{\alpha^{k}}(V_{1}) + Der_{\alpha^{k}}(V_{2}) \subseteq Der_{\alpha^{k}}(V)$ and $Der_{\alpha^{k}}(V_{1}) \cap Der_{\alpha^{k}}(V_{2}) = \{0\}$. Hence, $Der_{\alpha^{k}}(V_{1}) \dotplus Der_{\alpha^{k}}(V_{2}) \subseteq Der_{\alpha^{k}}(V)$.

$\bf{Step 3}$. We'll prove that $Der_{\alpha^{k}}(V_{1}) \dotplus Der_{\alpha^{k}}(V_{2}) = Der_{\alpha^{k}}(V)$.

Suppose that $D \in Der_{\alpha^{k}}(V)$. Set $x = x_{1} + x_{2}, x_{i} \in V_{i}$. Define $D_{1}, D_{2}$ as follows
\begin{equation}
\left\{
\begin{aligned}
D_{1}(x_{1} + x_{2}) = D(x_{1}),\\
D_{2}(x_{1} + x_{2}) = D(x_{2}).
\end{aligned}
\right.
\end{equation}
Obviously, $D = D_{1} + D_{2}$.

For any $u_{1}, v_{1} \in V_{1}$,
\begin{align*}
&D_{1}(\mu(u_{1}, v_{1})) = D(\mu(u_{1}, v_{1})) = \mu(D(u_{1}), \alpha^{k}(v_{1})) + \mu(\alpha^{k}(u_{1}), D(v_{1}))\\
&= \mu(D_{1}(u_{1}), \alpha^{k}(v_{1})) + \mu(\alpha^{k}(u_{1}), D_{1}(v_{1})).
\end{align*}
Hence, $D_{1} \in Der_{\alpha^{k}}(V_{1})$. Similarly, $D_{2} \in Der_{\alpha^{k}}(V_{2})$.

Therefore, $Der_{\alpha^{k}}(V_{1}) \dotplus Der_{\alpha^{k}}(V_{2}) = Der_{\alpha^{k}}(V)$ as a vector space.

Hence, we have
\begin{align*}
&Der(V) = \dotplus_{k \geq 0}Der_{\alpha^{k}}(V) = \dotplus_{k \geq 0}(Der_{\alpha^{k}}(V_{1}) \dotplus Der_{\alpha^{k}}(V_{2}))\\
&= (\dotplus_{k \geq 0}Der_{\alpha^{k}}(V_{1})) \dotplus (\dotplus_{k \geq 0}Der_{\alpha^{k}}(V_{2})) = Der(V_{1}) \dotplus Der(V_{2}).
\end{align*}

$\bf{Step 4}$. We'll show that $Der(V_{i})(i = 1, 2)$ are Hom-ideals of $Der(V)$.
Suppose that $D_{1} \in Der_{\alpha^{k}}(V_{1})$, $D_{2} \in Der_{\alpha^{s}}(V)$. Then for any $x_{1}, y_{1} \in V_{1}$, we have
\begin{align*}
&\sigma(D_{1})(\mu(x_{1}, y_{1})) = \alpha \circ D_{1}(\mu(x_{1}, y_{1})) = \alpha(\mu(D_{1}(x_{1}), \alpha^{k}(y_{1})) + \mu(\alpha^{k}(x_{1}), D_{1}(y_{1})))\\
&= \mu(\sigma(D_{1})(x_{1}), \alpha^{k + 1}(y_{1})) + \mu(\alpha^{k + 1}(x_{1}), \sigma(D_{1})(y_{1})),
\end{align*}
which implies that $\sigma(D_{1}) \in Der_{\alpha^{k + 1}}(V_{1})$.
\begin{align*}
&\nu^{'}(D_{1}, D_{2})(\mu(x_{1}, y_{1}))\\
&= D_{1} \circ D_{2}(\mu(x_{1}, y_{1})) - D_{2} \circ D_{1}(\mu(x_{1}, y_{1}))\\
&= D_{1}(\mu(D_{2}(x_{1}), \alpha^{s}(y_{1})) + \mu(\alpha^{s}(x_{1}), D_{2}(y_{1}))) - D_{2}(\mu(D_{1}(x_{1}), \alpha^{k}(y_{1})) + \mu(\alpha^{k}(x_{1}), D_{1}(y_{1})))\\
&= \mu(D_{1}(D_{2}(x_{1})), \alpha^{k + s}(y_{1})) + \mu(\alpha^{k}(D_{2}(x_{1})), D_{1}(\alpha^{s}(y_{1}))) + \mu(D_{1}(\alpha^{s}(x_{1})), \alpha^{k}(D_{2}(y_{1})))\\
&+ \mu(\alpha^{k + s}(x_{1}), D_{1}(D_{2}(y_{1}))) - \mu(D_{2}(D_{1}(x_{1})), \alpha^{k + s}(y_{1})) - \mu(\alpha^{s}(D_{1}(x_{1})), D_{2}(\alpha^{k}(y_{1})))\\
&- \mu(D_{2}(\alpha^{k}(x_{1})), \alpha^{s}(D_{1}(y_{1}))) - \mu(\alpha^{k + s}(x_{1}), D_{2}(D_{1}(y_{1})))\\
&= \mu(\nu^{'}(D_{1}, D_{2})(x_{1}), \alpha^{k + s}(y_{1})) + \mu(\alpha^{k + s}(x_{1}), \nu^{'}(D_{1}, D_{2})(y_{1})),
\end{align*}
which implies that $\nu^{'}(D_{1}, D_{2}) \in Der_{\alpha^{k + s}}(V_{1})$. So $Der(V_{1})$ is a Hom-ideal of $Der(V)$.

Similarly, we have $Der(V_{2})$ is a Hom-ideal of $Der(V)$.

Therefore, $Der(V) = Der(V_{1}) \oplus Der(V_{2})$.

(3), (4), (5) similar to the proof of (2).
\end{proof}
\begin{thm}
Let $(V, \mu, \alpha)$ be a multiplicative Hom-Jordan algebra, $\alpha$ a surjection and $Z(V)$ the centralizer of $(V, \mu, \alpha)$. Then $\nu^{'}(C(V), QC(V)) \subseteq End(V, Z(V))$. Moreover, if $Z(V) = \{0\}$, then $\nu^{'}(C(V), QC(V)) = \{0\}$.
\end{thm}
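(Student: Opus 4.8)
The plan is to reduce everything to homogeneous components and then to evaluate one and the same expression, $\mu(D_{1}\circ D_{2}(x),\alpha^{k+s}(y))$, in two different ways. First I would fix $D_{1}\in C_{\alpha^{k}}(V)$ and $D_{2}\in QC_{\alpha^{s}}(V)$; since $\nu^{'}$ and $\sigma$ respect the $\alpha$-grading it suffices to treat such homogeneous pairs and afterwards sum over $k$ and $s$. Throughout I would use three ingredients: the centroid identity for $D_{1}$ in both of its forms, $\mu(D_{1}(a),\alpha^{k}(b))=D_{1}(\mu(a,b))=\mu(\alpha^{k}(a),D_{1}(b))$; the quasicentroid identity $\mu(D_{2}(a),\alpha^{s}(b))=\mu(\alpha^{s}(a),D_{2}(b))$ for $D_{2}$; and the commutation relations $D_{1}\alpha=\alpha D_{1}$, $D_{2}\alpha=\alpha D_{2}$, which let me slide $\alpha^{k}$ through $D_{2}$ and $\alpha^{s}$ through $D_{1}$. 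I interpret $End(V,Z(V))$ as the set of linear maps on $V$ whose image lies in $Z(V)$, so the whole content is to prove $\nu^{'}(D_{1},D_{2})(V)\subseteq Z(V)$.

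Next I would carry out the two evaluations. Starting from $\mu(D_{1}(D_{2}(x)),\alpha^{k+s}(y))$, the first route applies the centroid identity of $D_{1}$ in the form $\mu(D_{1}(a),\alpha^{k}(b))=D_{1}(\mu(a,b))$ with $a=D_{2}(x)$ and $b=\alpha^{s}(y)$, then uses the quasicentroid relation of $D_{2}$ inside $D_{1}$, and finally applies the centroid identity of $D_{1}$ again in the form $D_{1}(\mu(a,b))=\mu(\alpha^{k}(a),D_{1}(b))$; this collapses to $\mu(\alpha^{k+s}(x),D_{1}D_{2}(y))$. The second route instead opens with the centroid identity of $D_{1}$ in the form $\mu(D_{1}(a),\alpha^{k}(b))=\mu(\alpha^{k}(a),D_{1}(b))$, slides $\alpha$ through $D_{1}$ and $D_{2}$, and then applies the quasicentroid relation of $D_{2}$; this collapses to $\mu(\alpha^{k+s}(x),D_{2}D_{1}(y))$. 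Equating the two evaluations of the same quantity yields $\mu(\alpha^{k+s}(x),\nu^{'}(D_{1},D_{2})(y))=0$ for all $x,y\in V$.

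To conclude, I would invoke surjectivity of $\alpha$: since $\alpha$ is onto, so is $\alpha^{k+s}$, hence as $x$ ranges over $V$ the element $\alpha^{k+s}(x)$ ranges over all of $V$. The identity above then says $\mu(w,\nu^{'}(D_{1},D_{2})(y))=0$ for every $w\in V$, and by commutativity of $\mu$ this means $\nu^{'}(D_{1},D_{2})(y)\in Z(V)$ for every $y$. Thus $\nu^{'}(D_{1},D_{2})\in End(V,Z(V))$, and summing over homogeneous components gives $\nu^{'}(C(V),QC(V))\subseteq End(V,Z(V))$. For the second assertion, if $Z(V)=\{0\}$ then $\nu^{'}(D_{1},D_{2})(y)=0$ for all $y$, so $\nu^{'}(D_{1},D_{2})=0$ and therefore $\nu^{'}(C(V),QC(V))=\{0\}$.

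The main obstacle, as I see it, is not any single manipulation but the idea driving the proof: one must recognize that evaluating $\mu(D_{1}D_{2}(x),\alpha^{k+s}(y))$ two ways deliberately produces $D_{1}D_{2}$ on the right slot via one route and $D_{2}D_{1}$ via the other, so that their difference pins the bracket into $Z(V)$. The delicate bookkeeping lies in choosing, at each step, whether to use the centroid identity in its ``$=D_{1}(\mu(\cdot,\cdot))$'' form or its ``$=\mu(\alpha^{k}(\cdot),D_{1}(\cdot))$'' form, and in making the powers of $\alpha$ accumulate to exactly $\alpha^{k+s}$ on both sides; this alignment of twists is the step most prone to error.
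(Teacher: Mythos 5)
Your proof is correct and takes essentially the same approach as the paper's: both rest on the two forms of the centroid identity for $D_{1}$, the quasicentroid identity for $D_{2}$, commutation with $\alpha$, and surjectivity of $\alpha^{k+s}$ to force $\nu^{'}(D_{1},D_{2})(V)\subseteq Z(V)$. The only difference is organizational --- the paper expands $\mu(\nu^{'}(D_{1},D_{2})(x),\alpha^{k+s}(y^{'}))$ directly and collapses both terms to a common $D_{1}(\cdots)$ expression that vanishes, while you evaluate $\mu(D_{1}D_{2}(x),\alpha^{k+s}(y))$ in two ways so the commutator appears in the second slot; these are the same manipulations rearranged.
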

\begin{proof}
Suppose that $D_{1} \in C_{\alpha^{k}}(V)$, $D_{2} \in QC_{\alpha^{s}}(V)$ and $x \in V$. Since $\alpha$ is surjective, there exists $y^{'} \in V$ such that $y = \alpha^{k + s}(y^{'})$ for any $y \in V$.
\begin{align*}
&\mu(\nu^{'}(D_{1}, D_{2})(x), y)\\
&= \mu(D_{1} \circ D_{2}(x), \alpha^{k + s}(y^{'})) - \mu(D_{2} \circ D_{1}(x), \alpha^{k + s}(y^{'}))\\
&= D_{1}(\mu(D_{2}(x), \alpha^{s}(y^{'}))) - \mu(\alpha^{s}(D_{1}(x)), D_{2}(\alpha^{k}(y^{'})))\\
&= D_{1}(\mu(D_{2}(x), \alpha^{s}(y^{'}))) - \mu(D_{1}(\alpha^{s}(x)), \alpha^{k}(D_{2}(y^{'})))\\
&= D_{1}(\mu(D_{2}(x), \alpha^{s}(y^{'}))) - D_{1}(\mu(\alpha^{s}(x), D_{2}(y^{'})))\\
&= D_{1}(\mu(D_{2}(x), \alpha^{s}(y^{'})) - \mu(\alpha^{s}(x), D_{2}(y^{'})))\\
&= 0,
\end{align*}
which implies that $\nu^{'}(D_{1}, D_{2})(x) \in Z(V)$, i.e., $\nu^{'}(C(V), QC(V)) \subseteq End(V, Z(V))$.

Furthermore, if $Z(V) = \{0\}$, it's clearly that $\nu^{'}(C(V), QC(V)) = \{0\}$.
\end{proof}
\begin{thm}
Let $(V, \mu, \alpha)$ be a multiplicative Hom-Jordan algebra. Then $ZDer(V) = C(V) \cap Der(V)$.
\end{thm}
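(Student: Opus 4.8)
The plan is to prove the two inclusions separately, and since every space in sight is the direct sum of its homogeneous $\alpha^{k}$-pieces inside $(\mathcal{W}, \nu', \sigma)$, it suffices to establish $ZDer_{\alpha^{k}}(V) = C_{\alpha^{k}}(V) \cap Der_{\alpha^{k}}(V)$ for each fixed $k \geq 0$ and then take direct sums over $k$. Both defining identities are purely bilinear in $x, y$, so the whole argument is a short manipulation of the conditions (1) and (2) in the relevant definitions together with the commutativity of $\mu$.

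For the inclusion $ZDer_{\alpha^{k}}(V) \subseteq C_{\alpha^{k}}(V) \cap Der_{\alpha^{k}}(V)$, I would start from $D \in ZDer_{\alpha^{k}}(V)$, so that $D \circ \alpha = \alpha \circ D$ together with $\mu(D(x), \alpha^{k}(y)) = 0$ and $D(\mu(x, y)) = 0$ for all $x, y$. The key observation is that commutativity of $\mu$ lets me rewrite $\mu(\alpha^{k}(x), D(y)) = \mu(D(y), \alpha^{k}(x))$, which is again of the form $\mu(D(\cdot), \alpha^{k}(\cdot))$ and hence also vanishes. Consequently the three expressions $\mu(D(x), \alpha^{k}(y))$, $\mu(\alpha^{k}(x), D(y))$ and $D(\mu(x, y))$ are simultaneously zero, which is exactly the defining identity of a centroid, so $D \in C_{\alpha^{k}}(V)$; and the derivation identity reduces to $0 = 0 + 0$, so $D \in Der_{\alpha^{k}}(V)$.

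For the reverse inclusion I would take $D \in C_{\alpha^{k}}(V) \cap Der_{\alpha^{k}}(V)$ and feed the centroid equalities into the derivation identity. Writing $D(\mu(x, y)) = \mu(D(x), \alpha^{k}(y)) + \mu(\alpha^{k}(x), D(y))$ and replacing each summand on the right by $D(\mu(x, y))$ (permitted by the centroid condition) yields $D(\mu(x, y)) = 2D(\mu(x, y))$, whence $D(\mu(x, y)) = 0$. The centroid condition then gives $\mu(D(x), \alpha^{k}(y)) = D(\mu(x, y)) = 0$ as well, so both requirements in the definition of $ZDer_{\alpha^{k}}(V)$ hold and $D \in ZDer_{\alpha^{k}}(V)$.

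The computation is entirely routine; the only point that deserves attention is the step $D(\mu(x, y)) = 2D(\mu(x, y)) \Rightarrow D(\mu(x, y)) = 0$, which uses that $2$ is invertible in the ground field $\mathrm{F}$ (the same characteristic-not-$2$ hypothesis already tacitly used when $\tfrac{1}{2}$ appeared in the proof that $GDer(V) = QDer(V) + QC(V)$). The other mild bookkeeping point is justifying that the componentwise equality lifts to the full statement $ZDer(V) = C(V) \cap Der(V)$, which holds because $C_{\alpha^{k}}(V)$ and $Der_{\alpha^{k}}(V)$ are the degree-$k$ graded pieces of one common grading, so $\dotplus_{k \geq 0}(C_{\alpha^{k}}(V) \cap Der_{\alpha^{k}}(V)) = (\dotplus_{k \geq 0}C_{\alpha^{k}}(V)) \cap (\dotplus_{k \geq 0}Der_{\alpha^{k}}(V))$.
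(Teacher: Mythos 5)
Your proposal is correct and takes essentially the same route as the paper's proof: the inclusion $ZDer_{\alpha^{k}}(V) \subseteq C_{\alpha^{k}}(V) \cap Der_{\alpha^{k}}(V)$ via commutativity of $\mu$, and the converse by substituting the centroid equalities into the derivation identity to obtain $D(\mu(x, y)) = 2D(\mu(x, y))$. One correction to a side remark: that last step does \emph{not} require $2$ to be invertible in $\mathrm{F}$ --- from $a = 2a$ one subtracts $a$ to get $a = 0$, which is valid over any field (invertibility of $2$ would be needed to pass from $2a = 0$ to $a = 0$, which is not the situation here), so the theorem carries no characteristic hypothesis, consistent with how it is stated. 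Your closing point about lifting the degreewise identity to $ZDer(V) = C(V) \cap Der(V)$ is sound, and is a piece of bookkeeping the paper passes over silently.
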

\begin{proof}
Assume that $D \in C(V)_{\alpha^{k}} \cap Der_{\alpha^{k}}(V)$. Then for any $x, y \in V$, we have
\[D(\mu(x, y)) = \mu(D(x), \alpha^{k}(y)) + \mu(\alpha^{k}(x), D(y))\]
since $D \in Der_{\alpha^{k}}(V)$.
\[D(\mu(x, y)) = \mu(D(x), \alpha^{k}(y)) = \mu(\alpha^{k}(x), D(y))\]
since $D \in C_{\alpha^{k}}(V)$.

Hence we have $D(\mu(x, y)) = \mu(D(x), \alpha^{k}(y)) = 0$, which implies that $D \in ZDer_{\alpha^{k}}(V)$. Therefore, $C(V) \cap Der(V) \subseteq ZDer(V)$.

On the other hand, assume that $D \in ZDer_{\alpha^{k}}(V)$. Then for any $x, y \in V$, we have
\[D(\mu(x, y)) = \mu(D(x), \alpha^{k}(y)) = 0,\]
hence we have
\[\mu(\alpha^{k}(x), D(y)) = \mu(D(y), \alpha^{k}(x)) = 0.\]
Therefore
\[D(\mu(x, y)) = \mu(D(x), \alpha^{k}(y)) + \mu(\alpha^{k}(x), D(y))\]
which implies that $D \in Der_{\alpha^{k}}(V)$.

And
\[D(\mu(x, y)) = \mu(D(x), \alpha^{k}(y)) = \mu(\alpha^{k}(x), D(y))\]
which implies that $D \in C_{\alpha^{k}}(V)$.

Therefore $D \in C(V)_{\alpha^{k}} \cap Der_{\alpha^{k}}(V)$, i.e., $ZDer(V) \subseteq C(V) \cap Der(V)$.

Hence, we have $ZDer(V) = C(V) \cap Der(V)$.
\end{proof}
\begin{thm}\label{thm:3.7}
Let $(V, \mu, \alpha)$ be a multiplicative Hom-Jordan algebra. Then $(QC(V), \nu, \sigma)$ is a Hom-Jordan algebra.
\end{thm}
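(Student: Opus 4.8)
The plan is to exhibit $(QC(V), \nu, \sigma)$ as a Hom-subalgebra of the Hom-Jordan algebra $(\mathcal{W}, \nu, \sigma)$ produced by Lemma \ref{le:2.7}(1). Once $QC(V)$ is shown to be a subspace of $\mathcal{W}$ that is stable under both the Jordan product $\nu$ and the twisting map $\sigma$, the commutativity of $\nu$ and the defining Hom-Jordan identity are automatically inherited by restriction, and the theorem follows at once. Membership $QC(V) \subseteq \mathcal{W}$ is immediate, since every $D \in QC_{\alpha^{k}}(V)$ satisfies $D \circ \alpha = \alpha \circ D$ by definition; and $QC(V)$ is a subspace because each $QC_{\alpha^{k}}(V)$ is cut out by linear conditions.

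The first thing I would verify is $\sigma$-invariance. For $D \in QC_{\alpha^{k}}(V)$ the map $\sigma(D) = \alpha \circ D$ still commutes with $\alpha$, and using the multiplicativity of $\alpha$ one gets
\begin{align*}
\mu(\sigma(D)(x), \alpha^{k+1}(y)) &= \alpha(\mu(D(x), \alpha^{k}(y))) = \alpha(\mu(\alpha^{k}(x), D(y))) = \mu(\alpha^{k+1}(x), \sigma(D)(y)),
\end{align*}
so that $\sigma(D) \in QC_{\alpha^{k+1}}(V)$ and hence $\sigma(QC(V)) \subseteq QC(V)$.

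The main step, and the one demanding the most care, is closure under $\nu(D_{1}, D_{2}) = D_{1} \circ D_{2} + D_{2} \circ D_{1}$. Take $D_{1} \in QC_{\alpha^{k}}(V)$ and $D_{2} \in QC_{\alpha^{s}}(V)$; the composite commutes with $\alpha$, so it remains to check the quasicentroid identity at level $k+s$. I would handle the two summands separately. Writing $\alpha^{k+s} = \alpha^{k} \circ \alpha^{s}$ and repeatedly using the intertwining relations $\alpha^{k} \circ D_{2} = D_{2} \circ \alpha^{k}$ and $\alpha^{s} \circ D_{1} = D_{1} \circ \alpha^{s}$, I apply the $QC_{\alpha^{k}}$-identity of $D_{1}$ and then the $QC_{\alpha^{s}}$-identity of $D_{2}$ to the term $\mu(D_{1}D_{2}(x), \alpha^{k+s}(y))$, moving both maps onto the right-hand argument to obtain $\mu(\alpha^{k+s}(x), D_{2}D_{1}(y))$; the symmetric computation turns $\mu(D_{2}D_{1}(x), \alpha^{k+s}(y))$ into $\mu(\alpha^{k+s}(x), D_{1}D_{2}(y))$. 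Adding the two equalities yields precisely $\mu(\nu(D_{1}, D_{2})(x), \alpha^{k+s}(y)) = \mu(\alpha^{k+s}(x), \nu(D_{1}, D_{2})(y))$, so $\nu(D_{1}, D_{2}) \in QC_{\alpha^{k+s}}(V)$ and therefore $\nu(QC(V), QC(V)) \subseteq QC(V)$. The only subtlety is keeping track of which quasicentroid identity and which intertwining relation is used at each stage; no idea beyond this bookkeeping is required, and one should note that, in contrast to Proposition \ref{prop:3.2}(3), it is the symmetric product $\nu$ rather than the bracket $\nu^{'}$ that keeps us inside $QC(V)$.

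With $\sigma$-invariance and closure under $\nu$ in hand, $QC(V)$ is a Hom-subalgebra of $(\mathcal{W}, \nu, \sigma)$, and since the latter is a Hom-Jordan algebra by Lemma \ref{le:2.7}(1), the restricted structure $(QC(V), \nu, \sigma)$ is again a Hom-Jordan algebra, which is the assertion.
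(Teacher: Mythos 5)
Your proposal is correct and follows essentially the same route as the paper: both reduce the theorem to showing that $QC(V)$ is a Hom-subalgebra of $(\mathcal{W}, \nu, \sigma)$ from Lemma \ref{le:2.7}(1), verify $\sigma(D) \in QC_{\alpha^{k+1}}(V)$ via multiplicativity of $\alpha$, and establish closure under $\nu$ by the same chain of moves (apply one quasicentroid identity, swap with the commutation relations $D_i \circ \alpha = \alpha \circ D_i$, apply the other). The only difference is presentational: you treat the two summands $D_1 \circ D_2$ and $D_2 \circ D_1$ separately and add, while the paper carries both through a single chain of equalities.
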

\begin{proof}
According to Lemma \ref{le:2.7} (1), we need only to show that $QC(V)$ is a Hom-subalgebra of $(\mathcal{W}, \nu, \sigma)$.

Suppose that $D_{1} \in QC_{\alpha^{k}}(V)$, $D_{2} \in QC_{\alpha^{s}}(V)$. Then for any $x, y \in V$, we have
\begin{align*}
&\mu(\sigma(D_{1})(x), \alpha^{k + 1}(y)) = \mu(\alpha \circ D_{1}(x), \alpha^{k + 1}(y)) = \alpha(\mu(D_{1}(x), \alpha^{k}(y)))\\
&= \alpha(\mu(\alpha^{k}(x), D_{1}(y))) = \mu(\alpha^{k + 1}(x), \sigma(D_{1})(y)),
\end{align*}
which implies that $\sigma(D_{1}) \in QC_{\alpha^{k + 1}}(V)$.
\begin{align*}
&\mu(\nu(D_{1}, D_{2})(x), \alpha^{k + s}(y))\\
&= \mu(D_{1} \circ D_{2}(x), \alpha^{k + s}(y)) + \mu(D_{2} \circ D_{1}(x), \alpha^{k + s}(y))\\
&= \mu(\alpha^{k}(D_{2}(x)), D_{1}(\alpha^{s}(y))) + \mu(\alpha^{s}(D_{1}(x)), D_{2}(\alpha^{k}(y)))\\
&= \mu(D_{2}(\alpha^{k}(x)), \alpha^{s}(D_{1}(y))) + \mu(D_{1}(\alpha^{s}(x)), \alpha^{k}(D_{2}(y)))\\
&= \mu(\alpha^{k + s}(x), D_{2}(D_{1}(y))) + \mu(\alpha^{k + s}(x), D_{1}(D_{2}(y)))\\
&= \mu(\alpha^{k + s}(x), \nu(D_{1}, D_{2})(y)),
\end{align*}
which implies that $\nu(D_{1}, D_{2}) \in QC_{\alpha^{k + s}}(V)$. Therefore, $QC(V)$ is a Hom-subalgebra of $(\mathcal{W}, \nu, \sigma)$, i.e., $(QC(V), \nu, \sigma)$ is a Hom-Jordan algebra.
\end{proof}
\begin{thm}
Suppose that $(V, \mu, \alpha)$ be a multiplicative Hom-Jordan algebra over a field $\rm{F}$ of characteristic other than $2$.
\begin{enumerate}[(1)]
\item $(QC(V), \nu^{'}, \sigma)$ is a Hom-Lie algebra if and only if $(QC(V), \iota, \sigma)$ is a Hom-associative algebra where $\iota(D_{1}, D_{2}) = D_{1} \circ D_{2},\quad\forall D_{1}, D_{2} \in QC(V)$.
\item If $\alpha$ is a surjection and $Z(V) = \{0\}$, then $(QC(V), \nu^{'}, \sigma)$ is a Hom-Lie algebra if and only if $\nu^{'}(QC(V), QC(V)) = \{0\}$.
\end{enumerate}
\end{thm}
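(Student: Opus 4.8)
The plan is to handle the two parts separately, the common engine being the splitting of ordinary composition into its symmetric and antisymmetric parts together with Theorem~\ref{thm:3.7}, which already gives that $QC(V)$ is closed under the anticommutator $\nu$.

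For part (1) the first observation is that, for either target structure, the defining identity is inherited from the ambient algebra and only closure of $QC(V)$ needs to be checked. Indeed, since every element of $QC(V)$ commutes with $\alpha$ and ordinary composition is associative, one has $(D_1\circ D_2)\circ\sigma(D_3)=\sigma(D_1)\circ(D_2\circ D_3)$ identically, so Hom-associativity of $(QC(V),\iota,\sigma)$ is automatic; likewise the Hom-Jacobi identity for $\nu'$ is the one valid on all of $\mathcal{W}$ by Lemma~\ref{le:2.7}(2). The key identity is
\[
D_1\circ D_2=\tfrac{1}{2}\bigl(\nu(D_1,D_2)+\nu'(D_1,D_2)\bigr),
\]
which makes sense since $\mathrm{char}\,\mathrm{F}\neq2$. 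If $(QC(V),\iota,\sigma)$ is Hom-associative, then $QC(V)$ is closed under $\circ$, hence under the difference $\nu'(D_1,D_2)=D_1\circ D_2-D_2\circ D_1$, so $(QC(V),\nu',\sigma)$ is a Hom-Lie algebra. Conversely, if $(QC(V),\nu',\sigma)$ is a Hom-Lie algebra, then $QC(V)$ is closed under $\nu'$; combined with closure under $\nu$ from Theorem~\ref{thm:3.7} and the displayed identity, $QC(V)$ is closed under $\circ$, so $(QC(V),\iota,\sigma)$ is Hom-associative.

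For part (2), the implication $\nu'(QC(V),QC(V))=\{0\}\Rightarrow(QC(V),\nu',\sigma)$ a Hom-Lie algebra is immediate, since the bracket is then identically zero and closure and Hom-Jacobi hold trivially. For the converse, assume $(QC(V),\nu',\sigma)$ is a Hom-Lie algebra and pick homogeneous $D_1\in QC_{\alpha^k}(V)$, $D_2\in QC_{\alpha^s}(V)$. Graded closure under $\nu'$ puts $\nu'(D_1,D_2)$ in $QC_{\alpha^{k+s}}(V)$, whence
\[
\mu\bigl(\nu'(D_1,D_2)(x),\alpha^{k+s}(y)\bigr)=\mu\bigl(\alpha^{k+s}(x),\nu'(D_1,D_2)(y)\bigr).
\]
On the other hand, the computation carried out in Proposition~\ref{prop:3.2}(3) shows that the same expression is antisymmetric, namely
\[
\mu\bigl(\nu'(D_1,D_2)(x),\alpha^{k+s}(y)\bigr)=-\mu\bigl(\alpha^{k+s}(x),\nu'(D_1,D_2)(y)\bigr).
\]
Adding the two and using $\mathrm{char}\,\mathrm{F}\neq2$ forces $\mu(\nu'(D_1,D_2)(x),\alpha^{k+s}(y))=0$ for all $x,y$. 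Since $\alpha$ is surjective, $\alpha^{k+s}(y)$ ranges over all of $V$, so $\nu'(D_1,D_2)(x)\in Z(V)=\{0\}$ for every $x$, that is $\nu'(D_1,D_2)=0$; by bilinearity $\nu'(QC(V),QC(V))=\{0\}$.

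The step I expect to be the main obstacle is the collision in part (2) between the two pairing identities for $\nu'(D_1,D_2)$: it is simultaneously symmetric, coming from the closure hypothesis, and antisymmetric, coming from Proposition~\ref{prop:3.2}(3). Making this rigorous requires being careful that $\nu'(D_1,D_2)$ really lands in the single graded summand $QC_{\alpha^{k+s}}(V)$, so that both relations are read at the same exponent $k+s$; once that is secured, the surjectivity of $\alpha$ and the triviality of $Z(V)$ close the argument at once. Part (1) is comparatively soft, its only content being the symmetric/antisymmetric splitting above.
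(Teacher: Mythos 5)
Your proposal is correct and follows essentially the same route as the paper: part (1) via the splitting $\iota(D_1,D_2)=\tfrac{1}{2}\bigl(\nu(D_1,D_2)+\nu'(D_1,D_2)\bigr)$ together with Theorem~\ref{thm:3.7}, and part (2) by playing the symmetric pairing identity (from graded closure, read at exponent $k+s$ exactly as the paper does) against the antisymmetric one of Proposition~\ref{prop:3.2}(3), then invoking surjectivity of $\alpha$ and $Z(V)=\{0\}$. Your explicit check that Hom-associativity and the Hom-Jacobi identity are inherited from $\mathcal{W}$ is a small refinement the paper leaves implicit, but it does not change the argument.
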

\begin{proof}
(1). $(\Leftarrow)$. For all $D_{1}, D_{2} \in QC(V)$, we have $D_{1} \circ D_{2} \in QC(V)$, $D_{2} \circ D_{1} \in QC(V)$. So $\nu^{'}(D_{1}, D_{2}) \in QC(V)$. Moreover, $\sigma(D_{1}) \in QC(V)$. Therefore, $(QC(V), \nu^{'}, \sigma)$ is a Hom-Lie algebra.

$(\Rightarrow)$. For all $D_{1}, D_{2} \in QC(V)$, we have $\iota(D_{1}, D_{2}) = \frac{1}{2}(\nu^{'}(D_{1}, D_{2}) + \nu(D_{1}, D_{2}))$. According to Theorem \ref{thm:3.7}, we have $\nu(D_{1}, D_{2}) \in QC(V)$. Note that $\nu^{'}(D_{1}, D_{2}) \in QC(V)$, we have $\iota(D_{1}, D_{2}) \in QC(V)$. Hence, $(QC(V), \iota, \sigma)$ is a Hom-associative algebra.

(2). $(\Rightarrow)$. Suppose that $D_{1} \in QC_{\alpha^{k}}(V)$, $D_{2} \in QC_{\alpha^{s}}(V)$ and $x \in V$. Since $\alpha$ is a surjection, there exists $y^{'} \in V$ such that $y = \alpha^{k + s}(y^{'})$ for any $y \in V$. Note that $(QC(V), \nu^{'}, \sigma)$ is a Hom-Lie algebra, then $\nu^{'}(D_{1}, D_{2}) \in QC_{\alpha^{k + s}}(V)$. Then
\[\mu(\nu^{'}(D_{1}, D_{2})(x), y) = \mu(\nu^{'}(D_{1}, D_{2})(x), \alpha^{k + s}(y^{'})) = \mu(\alpha^{k + s}(x), \nu^{'}(D_{1}, D_{2})(y^{'})).\]

Note that
\begin{align*}
&\mu(\nu^{'}(D_{1}, D_{2})(x), \alpha^{k + s}(y^{'}))\\
&= \mu(D_{1} \circ D_{2}(x), \alpha^{k + s}(y^{'})) - \mu(D_{2} \circ D_{1}(x), \alpha^{k + s}(y^{'}))\\
&= \mu(\alpha^{k}(D_{2}(x)), D_{1}(\alpha^{s}(y^{'}))) - \mu(\alpha^{s}(D_{1}(x)), D_{2}(\alpha^{k}(y^{'})))\\
&= \mu(D_{2}(\alpha^{k}(x)), \alpha^{s}(D_{1}(y^{'}))) - \mu(D_{1}(\alpha^{s}(x)), \alpha^{k}(D_{2}(y^{'})))\\
&= \mu(\alpha^{k + s}(x), D_{2} \circ D_{1}(y^{'})) - \mu(\alpha^{k + s}(x), D_{1} \circ D_{2}(y^{'}))\\
&= -\mu(\alpha^{k + s}(x), \nu^{'}(D_{1}, D_{2})(y^{'})).
\end{align*}
Hence, we have $\mu(\nu^{'}(D_{1}, D_{2})(x), y) = \mu(\nu^{'}(D_{1}, D_{2})(x), \alpha^{k + s}(y^{'})) = 0$, which implies that $\nu^{'}(D_{1}, D_{2})(x) \in Z(V)$. Note that $Z(V) = \{0\}$,  we have $\nu^{'}(D_{1}, D_{2})(x) = 0$, i.e., $\nu^{'}(D_{1}, D_{2}) = 0$. Therefore, $\nu^{'}(QC(V), QC(V)) = \{0\}$.

$(\Leftarrow)$. Obviously.
\end{proof}

\section{The quasiderivations of Hom-Jordan algebras}\label{se:4}

In this section, we will prove that the quasiderivations of $(V, \mu, \alpha)$ can be embedded as derivations in a larger Hom-Jordan agebra and obtain a direct sum decomposition of $Der(\breve{V})$ when the centralizer of $(V, \mu, \alpha)$ is equal to zero.
\begin{prop}
Suppose that $(V, \mu, \alpha)$ is a Hom-Jordan algebra over $\rm{F}$ and $t$ is an indeterminate. We define $\breve{V} := \{\sum(x \otimes t + y \otimes t^{2}) | x, y \in V\}$, $\breve{\alpha}(\breve{V}) := \{\sum(\alpha(x) \otimes t + \alpha(y) \otimes t^{2}) | x, y \in V\}$ and $\breve{\mu}(x \otimes t^{i},  y \otimes t^{j}):= \mu(x,y) \otimes t^{i + j}$ where $x, y \in V, i, j \in \{1, 2\}$. Then $(\breve{V}, \breve{\mu}, \breve{\alpha})$ is a Hom-Jordan algebra.
\end{prop}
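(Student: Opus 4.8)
The plan is to regard $\breve{V}$ as the vector space $V\otimes t \oplus V\otimes t^{2}$, i.e.\ two copies of $V$ graded by the powers $t^{1},t^{2}$, and to read the definition of $\breve{\mu}$ with the convention that any term of degree $\geq 3$ is zero, since otherwise $\breve{\mu}$ would fail to take values in $\breve{V}$. Concretely this forces $\breve{\mu}(x\otimes t^{i},y\otimes t^{j})=\mu(x,y)\otimes t^{i+j}=0$ whenever $i+j\geq 3$, so the only surviving product of generators is $\breve{\mu}(x\otimes t,y\otimes t)=\mu(x,y)\otimes t^{2}\in V\otimes t^{2}$. First I would do the bookkeeping: $\breve{\alpha}$ is a well-defined linear map because it acts componentwise through $\alpha$, and $\breve{\mu}$ extends to a well-defined bilinear map on $\breve{V}$ by bilinearity in each slot.

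Next I would verify commutativity of $\breve{\mu}$, the first axiom of a Hom-Jordan algebra. On generators this is immediate from commutativity of $\mu$: $\breve{\mu}(x\otimes t^{i},y\otimes t^{j})=\mu(x,y)\otimes t^{i+j}=\mu(y,x)\otimes t^{j+i}=\breve{\mu}(y\otimes t^{j},x\otimes t^{i})$, and this propagates to all of $\breve{V}$ by bilinearity.

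The heart of the argument is the Hom-Jordan identity $\breve{\mu}(\breve{\alpha}^{2}(a),\breve{\mu}(b,\breve{\mu}(a,a)))=\breve{\mu}(\breve{\mu}(\breve{\alpha}(a),b),\breve{\alpha}(\breve{\mu}(a,a)))$ for all $a,b\in\breve{V}$. The key observation, which I expect to collapse everything, is that every product of three elements of $\breve{V}$ vanishes: by the computation above the product of any two elements lies in $V\otimes t^{2}$, and multiplying an arbitrary element of $\breve{V}$ (with components only in degrees $1$ and $2$) by something in $V\otimes t^{2}$ yields only terms of degree $\geq 3$, hence $0$. Now both sides of the identity are such higher products. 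On the left, $\breve{\mu}(b,\breve{\mu}(a,a))$ is already the product of $b$ with the degree-$2$ element $\breve{\mu}(a,a)$, so it is $0$, forcing the whole left side to be $0$. On the right, $\breve{\mu}(\breve{\alpha}(a),b)$ and $\breve{\alpha}(\breve{\mu}(a,a))$ both lie in $V\otimes t^{2}$, so their $\breve{\mu}$-product has degree $4$ and is $0$. Hence both sides equal $0$ and the identity holds as $0=0$.

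Assembling these three points shows that $(\breve{V},\breve{\mu},\breve{\alpha})$ is a commutative Hom-algebra satisfying the defining Hom-Jordan identity, so it is a Hom-Jordan algebra. I do not anticipate a genuine computational obstacle; the only point requiring care is making the truncation convention ($t^{n}=0$ for $n\geq 3$) explicit, after which the nonlinear Hom-Jordan identity becomes trivial precisely because all triple products in $\breve{V}$ vanish. It is worth remarking that no multiplicativity of $\alpha$ is used for this statement, only linearity of $\breve{\alpha}$ together with commutativity of $\breve{\mu}$.
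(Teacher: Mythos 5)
Your proof is correct, and it takes a genuinely different route from the paper's. The paper verifies the Hom-Jordan identity on generators: for $a = x \otimes t^{i}$, $b = y \otimes t^{j}$ it rewrites both sides as $\mu(\alpha^{2}(x), \mu(y, \mu(x,x))) \otimes t^{3i+j}$ and $\mu(\mu(\alpha(x), y), \alpha(\mu(x,x))) \otimes t^{3i+j}$ and equates them by invoking the Hom-Jordan identity in $V$. You instead observe that under the truncation convention ($t^{n} = 0$ for $n \geq 3$) every product of two elements of $\breve{V}$ lands in $V \otimes t^{2}$, hence every triple product vanishes, so both sides of the identity are zero for arbitrary $a, b \in \breve{V}$, and the Jordan identity of $V$ is never used. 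This buys you two things. First, rigor: the Hom-Jordan identity is cubic in $a$, not multilinear, so checking it only on generators (as the paper does) does not by itself yield it for sums $a = \sum_{k}(x_{k} \otimes t + y_{k} \otimes t^{2})$; one would need to linearize or, as you do, argue for general elements, and your degree argument closes this gap at no extra cost. Second, generality: your argument exposes that the conclusion requires only commutativity of $\mu$ and linearity of $\alpha$, not that $(V, \mu, \alpha)$ itself be Hom-Jordan. The paper's computation is nonetheless consistent with yours: since $3i + j \geq 4$ for $i, j \in \{1, 2\}$, both of its displayed sides are themselves zero, so its appeal to the identity in $V$ is correct but redundant; that style of argument would only become essential in an untruncated extension such as $V \otimes t\textbf{F}[t]$, where triple products survive.
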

\begin{proof}
It's obvious that $\breve{\mu}$ is a bilinear map and commutative since $\mu$ is a bilinear map and commutative.

For any $x \otimes t^{i},  y \otimes t^{j} \in \breve{V}$, we have
\begin{align*}
&\breve{\mu}(\breve{\alpha}^{2}(x \otimes t^{i}), \breve{\mu}(y \otimes t^{j}, \breve{\mu}(x \otimes t^{i}, x \otimes t^{i})))\\
&= \mu(\alpha^{2}(x), \mu(y, \mu(x, x))) \otimes t^{3i + j}\\
&= \mu(\mu(\alpha(x), y), \alpha(\mu(x, x))) \otimes t^{3i + j}\\
&= \breve{\mu}(\breve{\mu}(\breve{\alpha}(x \otimes t^{i}), y \otimes t^{j}), \breve{\alpha}(\breve{\mu}(x \otimes t^{i}, x \otimes t^{i}))).
\end{align*}
Therefore, $(\breve{V}, \breve{\mu}, \breve{\alpha})$ is a Hom-Jordan algebra.
\end{proof}

For convenience, we write $xt(xt^{2})$ in place of $x \otimes t(x \otimes t^{2})$.

If $U$ is a subspace of $V$ such that $V = U \dotplus \mu(V, V)$, then
\[\breve{V} = Vt + Vt^{2} = Vt + Ut^{2} + \mu(V, V)t^{2}.\]
Now we define a map $\varphi : QDer(V) \rightarrow End(\breve{V})$ satisfying
\[\varphi(D)(at + ut^{2} + bt^{2}) = D(a)t + D^{'}(b)t^{2},\]
where $D \in QDer_{\alpha^{k}}(V)$, and $D^{'}$ is in Definition \ref{defn:2.11} (2), $a \in V, u \in U, b \in \mu(V, V)$.
\begin{prop}\label{prop:4.2}
$V, \breve{V}, \varphi$ are defined as above. Then
\begin{enumerate}[(1)]
\item $\varphi$ is injective and $\varphi(D)$ does not depend on the choice of $D^{'}$.
\item $\varphi(QDer(V)) \subseteq Der(\breve{V})$.
\end{enumerate}
\end{prop}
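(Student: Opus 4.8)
The plan is to handle the two assertions of part (1) separately and then verify directly that each $\varphi(D)$ satisfies the two defining axioms of an $\breve{\alpha}^{k}$-derivation of $\breve{V}$.

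For part (1), I would first check that $\varphi(D)$ is unambiguous. The only freedom in the formula $\varphi(D)(at + ut^{2} + bt^{2}) = D(a)t + D'(b)t^{2}$ is the auxiliary map $D'$, and it enters only through its restriction to $\mu(V,V)$, since $b \in \mu(V,V)$. If $D'_{1}$ and $D'_{2}$ both satisfy Definition \ref{defn:2.11}(2) for the same $D$, then $D'_{1}(\mu(x,y)) = \mu(D(x),\alpha^{k}(y)) + \mu(\alpha^{k}(x),D(y)) = D'_{2}(\mu(x,y))$ for all $x,y$, so they agree on $\mu(V,V)$; hence $\varphi(D)$ does not depend on the choice of $D'$. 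The decomposition $\breve{V} = Vt \dotplus Ut^{2} \dotplus \mu(V,V)t^{2}$ being direct guarantees that the expression $at + ut^{2} + bt^{2}$ is unique, so $\varphi(D)$ is a genuine element of $End(\breve{V})$ and $\varphi$ is linear. Injectivity is then immediate: if $\varphi(D) = 0$, evaluating on $at$ gives $D(a)t = 0$ for every $a \in V$, whence $D = 0$.

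For part (2), fix $D \in QDer_{\alpha^{k}}(V)$ with partner $D'$; I would show $\varphi(D) \in Der_{\breve{\alpha}^{k}}(\breve{V})$ by verifying the two conditions of a derivation. The computational simplification that makes everything work is the grading of $\breve{V}$: since $\breve{V}$ carries only $t$- and $t^{2}$-components and $\breve{\mu}(xt^{i}, yt^{j}) = \mu(x,y)t^{i+j}$, every product with a $t^{2}$-factor lands in degree $\geq 3$ and hence vanishes. Thus for $\xi = a_{1}t + \cdots$ and $\eta = a_{2}t + \cdots$ only the $t\cdot t$ term survives, giving $\breve{\mu}(\xi,\eta) = \mu(a_{1},a_{2})t^{2} \in \mu(V,V)t^{2}$. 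Applying $\varphi(D)$ yields $D'(\mu(a_{1},a_{2}))t^{2}$, while the right-hand side $\breve{\mu}(\varphi(D)(\xi), \breve{\alpha}^{k}(\eta)) + \breve{\mu}(\breve{\alpha}^{k}(\xi), \varphi(D)(\eta))$ collapses, again by the grading, to $\bigl(\mu(D(a_{1}),\alpha^{k}(a_{2})) + \mu(\alpha^{k}(a_{1}),D(a_{2}))\bigr)t^{2}$; these two agree precisely by Definition \ref{defn:2.11}(2). So the Leibniz axiom reduces exactly to the quasiderivation identity for $D$.

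It remains to check the commutation $\varphi(D)\circ\breve{\alpha} = \breve{\alpha}\circ\varphi(D)$. On the $Vt$-summand this is just $D\circ\alpha = \alpha\circ D$, and on $\mu(V,V)t^{2}$ it is $D'\circ\alpha = \alpha\circ D'$ (both from Definition \ref{defn:2.11}(1)), together with $\alpha(\mu(V,V)) \subseteq \mu(V,V)$ from multiplicativity. The delicate point, and the one I expect to be the main obstacle, is the $Ut^{2}$-summand: $\breve{\alpha}$ preserves $Vt^{2}$ but need not preserve the finer splitting $Ut^{2} \dotplus \mu(V,V)t^{2}$, so a priori $\breve{\alpha}(ut^{2})$ can acquire a $\mu(V,V)$-component on which $\varphi(D)$ acts by $D'$ rather than by $0$. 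The clean way around this is to choose the complement $U$ to be $\alpha$-invariant, which is available because $\mu(V,V)$ is $\alpha$-stable; then $\breve{\alpha}$ respects all three summands, $\varphi(D)$ annihilates $Ut^{2}$ both before and after applying $\breve{\alpha}$, and the commutation holds summand by summand. With both axioms verified, $\varphi(D) \in Der_{\breve{\alpha}^{k}}(\breve{V})$, and summing over $k$ gives $\varphi(QDer(V)) \subseteq Der(\breve{V})$.
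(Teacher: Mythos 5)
Your part (1) and your Leibniz computation in part (2) coincide with the paper's own proof: uniqueness of $D'$ on $\mu(V,V)$ via the quasiderivation identity, injectivity read off from the $Vt$-component, and the observation that all products of total degree $\geq 3$ in $t$ vanish, so the derivation identity for $\varphi(D)$ reduces exactly to Definition \ref{defn:2.11}(2). The gap is in your final step, the verification of $\varphi(D)\circ\breve{\alpha}=\breve{\alpha}\circ\varphi(D)$. You correctly isolate the dangerous summand $Ut^{2}$, but your fix rests on the claim that an $\alpha$-invariant complement $U$ of $\mu(V,V)$ can always be chosen ``because $\mu(V,V)$ is $\alpha$-stable.'' That claim is false: an $\alpha$-stable subspace need not admit an $\alpha$-stable complement (and in any case the proposition is stated for an arbitrary, pre-chosen $U$, so you are not free to replace it).

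Concretely, over a field of characteristic $\neq 2$ let $V=\mathbb{F}e_{1}\oplus\mathbb{F}e_{2}$ with $\mu(e_{2},e_{2})=e_{1}$, all other products zero, and $\alpha(e_{2})=e_{1}$, $\alpha(e_{1})=0$. This is a multiplicative Hom-Jordan algebra with $\mu(V,V)=\mathbb{F}e_{1}$; every complement has the form $U=\mathbb{F}(e_{2}+ce_{1})$, and $\alpha(e_{2}+ce_{1})=e_{1}\notin U$, so no $\alpha$-invariant complement exists. Worse, the obstacle is not removable by any cleverness: $id\in QDer_{\alpha^{0}}(V)$ with partner $D'=2\,id$, and for $u=e_{2}+ce_{1}$ one gets $\varphi(id)(\breve{\alpha}(ut^{2}))=\varphi(id)(e_{1}t^{2})=2e_{1}t^{2}\neq 0=\breve{\alpha}(\varphi(id)(ut^{2}))$, so $\varphi(id)$ fails the commutation axiom for every choice of $U$. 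For what it is worth, the paper's own proof never checks this axiom at all --- it verifies only the Leibniz identity --- so the difficulty you ran into is a genuine gap in the paper: with the paper's definition of $Der(\breve{V})$, statement (2) requires an additional hypothesis (for instance, the existence of an $\alpha$-invariant complement, which holds e.g.\ when $\alpha$ is semisimple, or when $V$ is perfect so that $U=0$). Your detection of the problem improves on the paper; your resolution of it does not go through.
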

\begin{proof}
(1). If $\varphi(D_{1}) = \varphi(D_{2})$, then for all $a \in V, u \in U, b \in \mu(V, V)$, we have
\[\varphi(D_{1})(at + ut^{2} + bt^{2}) = \varphi(D_{2})(at + ut^{2} + bt^{2}),\]
which implies that
\[D_{1}(a)t + D_{1}^{'}(b)t^{2} = D_{2}(a)t + D_{2}^{'}(b)t^{2}.\]
Hence we have
\[D_{1}(a) = D_{2}(a),\]
which implies that $D_{1} = D_{2}$. Therefore, $\varphi$ is injective.

Suppose that there exists $D^{''}$ such that $\varphi(D)(at + ut^{2} + bt^{2}) = D(a)t + D^{''}(b)t^{2}$ and $\mu(D(x), \alpha^{k}(y)) + \mu(\alpha^{k}(x), D(y)) = D^{''}(\mu(x, y))$, then we have
\[D^{''}(\mu(x, y)) = D^{'}(\mu(x, y)),\]
which implies that $D^{''}(b) = D^{'}(b)$.

Hence, we have
\[\varphi(D)(at + ut^{2} + bt^{2}) = D(a)t + D^{''}(b)t^{2} = D(a)t + D^{'}(b)t^{2}.\]
That is to say $\varphi(D)$ does not depend on the choice of $D^{'}$.

(2). $\forall i + j \geq 3$, we have $\breve{\mu}(x \otimes t^{i}, y \otimes t^{j}) = \mu(x, y) \otimes t^{i + j} = 0$. Hence, we need only to show that
\[\varphi(D)(\breve{\mu}(xt, yt)) = \breve{\mu}(\varphi(D)(xt), \breve{\alpha}^{k}(yt)) + \breve{\mu}(\breve{\alpha}^{k}(xt), \varphi(D)(yt)).\]

For all $x, y \in V$, we have
\begin{align*}
&\varphi(D)(\breve{\mu}(xt, yt))\\
&= \varphi(D)(\mu(x, y)t^{2}) = D^{'}(\mu(x, y))t^{2}\\
&= \mu(D(x), \alpha^{k}(y))t^{2} + \mu(\alpha^{k}(x), D(y))t^{2}\\
&= \breve{\mu}(D(x)t, \alpha^{k}(y)t) + \breve{\mu}(\alpha^{k}(x)t, D(y)t)\\
&= \breve{\mu}(\varphi(D)(xt), \breve{\alpha}^{k}(yt)) + \breve{\mu}(\breve{\alpha}^{k}(xt), \varphi(D)(yt)).
\end{align*}
Hence, $\varphi(D) \in Der_{\breve{\alpha}^{k}}(\breve{V})$. Therefore, $\varphi(QDer(V)) \subseteq Der(\breve{V})$.
\end{proof}
\begin{prop}
Let $(V, \mu, \alpha)$ be a Hom-Jordan algebra. $Z(V) = \{0\}$ and $\breve{V}, \varphi$ are defined as above. Then $Der(\breve{V}) = \varphi(QDer(V)) \dotplus ZDer(\breve{V})$.
\end{prop}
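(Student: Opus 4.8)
The inclusion $\varphi(QDer(V)) \dotplus ZDer(\breve{V}) \subseteq Der(\breve{V})$ is already in hand: Proposition \ref{prop:4.2} gives $\varphi(QDer(V)) \subseteq Der(\breve{V})$, and $ZDer(\breve{V}) \subseteq Der(\breve{V})$ by definition. So the whole content is to establish the reverse inclusion $Der(\breve{V}) \subseteq \varphi(QDer(V)) + ZDer(\breve{V})$ together with directness of the sum. The plan rests on two structural facts about $\breve{V}$. First, $\breve{\mu}$ annihilates every product of total degree $\geq 3$, so the only surviving products are $\breve{\mu}(xt, yt) = \mu(x, y)t^{2}$, whence $\breve{\mu}(\breve{V}, \breve{V}) = \mu(V, V)t^{2}$. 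Second, using $Z(V) = \{0\}$ one checks that $z = at + bt^{2}$ is central iff $\mu(a, V) = 0$, i.e. $Z(\breve{V}) = Vt^{2}$; this is the target space into which the central part of any derivation must land.

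For the reverse inclusion I would start from an arbitrary $f \in Der_{\breve{\alpha}^{k}}(\breve{V})$ and record it in components. Since $\breve{V} = Vt \oplus Vt^{2}$, there are unique linear maps $\phi, \psi, \rho, \theta \in End(V)$ with $f(xt) = \phi(x)t + \psi(x)t^{2}$ and $f(yt^{2}) = \rho(y)t + \theta(y)t^{2}$, and the relation $f\breve{\alpha} = \breve{\alpha}f$ forces each of $\phi, \psi, \rho, \theta$ to commute with $\alpha$. I would then feed the two kinds of products into the derivation identity. Evaluating $f$ on $\breve{\mu}(xt, yt) = \mu(x, y)t^{2}$ and matching the $t$- and $t^{2}$-components yields $\rho(\mu(x, y)) = 0$ and $\theta(\mu(x, y)) = \mu(\phi(x), \alpha^{k}(y)) + \mu(\alpha^{k}(x), \phi(y))$; the latter is precisely the statement that $\phi \in QDer_{\alpha^{k}}(V)$ with associated map $D' = \theta$. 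Evaluating the derivation identity on the vanishing product $\breve{\mu}(ut^{2}, xt) = 0$ gives the extra orthogonality relation $\mu(\rho(u), \alpha^{k}(x)) = 0$ for all $u \in U$, $x \in V$.

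Now set $D := \phi \in QDer_{\alpha^{k}}(V)$ and consider $f' := f - \varphi(D)$, which still commutes with $\breve{\alpha}$ and is a derivation. Computing $f'$ on the three summands $Vt$, $\mu(V, V)t^{2}$, $Ut^{2}$ one finds $f'(xt) = \psi(x)t^{2}$, $f'(bt^{2}) = \rho(b)t = 0$ for $b \in \mu(V, V)$ (by $\rho|_{\mu(V,V)} = 0$), and $f'(ut^{2}) = \rho(u)t + \theta(u)t^{2}$. All central-derivation conditions are then checkable termwise: $f'$ annihilates $\breve{\mu}(\breve{V}, \breve{V}) = \mu(V, V)t^{2}$, and $\breve{\mu}(f'(w), \breve{\alpha}^{k}(w')) = 0$ for each generator $w$, the only nonobvious case $w = ut^{2}$ reducing exactly to the relation $\mu(\rho(u), \alpha^{k}(x)) = 0$ secured above. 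Hence $f' \in ZDer_{\breve{\alpha}^{k}}(\breve{V})$ and $f = \varphi(D) + f' \in \varphi(QDer(V)) + ZDer(\breve{V})$.

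Finally, for directness I would take $g = \varphi(D) \in ZDer(\breve{V})$ and read off the central conditions: $\breve{\mu}(g(xt), \breve{\alpha}^{k}(yt)) = 0$ gives $\mu(D(x), \alpha^{k}(y)) = 0$, while $g(\breve{\mu}(xt, yt)) = 0$ gives $D'|_{\mu(V,V)} = 0$; using $Z(V) = \{0\}$ one concludes $D = 0$, whence $\varphi(D) = 0$ by the injectivity of $\varphi$ from Proposition \ref{prop:4.2}. I expect the main obstacle to be the bookkeeping of the previous paragraph: verifying that $f - \varphi(D)$ is genuinely central hinges on extracting from $f$'s derivation identity exactly the orthogonality $\mu(\rho(u), \alpha^{k}(x)) = 0$ on the complement $U$. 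It is at this point, and again in the directness step, that one must pass from $\mu(D(x), \alpha^{k}(V)) = 0$ to $D(x) = 0$; this is where the hypothesis $Z(V) = \{0\}$ is indispensable, the passage being immediate when $\alpha$ has full image, as in the related statements of this section.
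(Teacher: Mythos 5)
Your proposal is correct (it leans on the same implicit hypothesis --- surjectivity of $\alpha$ --- that the paper's own proof invokes without stating, and you flag this explicitly), but the mechanism in the key step is genuinely different from the paper's. The paper works center-first: it computes $Z(\breve{V}) = Vt^{2}$, proves that every derivation of $\breve{V}$ preserves $Z(\breve{V})$ --- a step that requires $\breve{\alpha}$ to be surjective --- deduces $g(Ut^{2}) \subseteq Vt^{2}$, defines the central summand $f$ by hand (the $Vt^{2}$-component of $g$ on $Vt$, $g$ itself on $Ut^{2}$, zero on $\mu(V,V)t^{2}$), and only afterwards observes that $g - f$ has the triangular form $\varphi(D)$ with $D \in QDer_{\alpha^{k}}(V)$. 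You work quasiderivation-first: you decompose the derivation into components $\phi, \psi, \rho, \theta$, read off that $\phi \in QDer_{\alpha^{k}}(V)$ with associated map $\theta$ from the $t^{2}$-part of the derivation identity on $\breve{\mu}(xt, yt)$, and --- this is the real divergence --- extract the orthogonality $\mu(\rho(u), \alpha^{k}(V)) = 0$ by applying the derivation identity to the vanishing products $\breve{\mu}(ut^{2}, xt) = 0$; this relation replaces the paper's center-preservation lemma and lets you verify directly that $f - \varphi(\phi)$ is a central derivation. What your route buys: the inclusion $Der(\breve{V}) \subseteq \varphi(QDer(V)) + ZDer(\breve{V})$ becomes independent of the surjectivity of $\alpha$, which is then needed only in the directness step (where both you and the paper must pass from $\mu(D(x), \alpha^{k}(V)) = 0$ to $D(x) \in Z(V) = \{0\}$); moreover you record explicitly that $\phi$ and $\theta$ commute with $\alpha$, a requirement of Definition \ref{defn:2.11} that the paper's proof never checks for its $D$, $D^{'}$. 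What the paper's route buys is the structural fact $Z(\breve{V}) = Vt^{2}$ together with its invariance under derivations, which it then reuses verbatim in the directness argument, whereas you argue directness downstairs in $V$. Two minor slips on your side, neither a gap: concluding $\varphi(D) = 0$ from $D = 0$ is not the injectivity of $\varphi$ but the choice-independence of $\varphi(D)$ in Proposition \ref{prop:4.2}(1) (take $D^{'} = 0$); and your orthogonality relation actually holds for all $u \in V$, not just $u \in U$, though you only need it on $U$.
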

\begin{proof}
Assume that $xt + yt^{2} \in Z(\breve{V})$, then for any $x^{'}t + y^{'}t^{2} \in \breve{V}$, we have
\[0 = \breve{\mu}(xt + yt^{2}, x^{'}t + y^{'}t^{2}) = \mu(x, x^{'})t^{2},\]
which implies that $\mu(x, x^{'}) = 0$. Note that $Z(V) = \{0\}$, we have $x = 0$. Hence, $Z(\breve{V}) \subseteq Vt^{2}$. Obviously, $Vt^{2} \subseteq Z(\breve{V})$. Therefore, $Z(\breve{V}) = Vt^{2}$.

Suppose that $g \in Der_{\breve{\alpha}^{k}}(\breve{V})$, $at^{2} \in Z(\breve{V})$. Since $\alpha$ is surjective, $\breve{\alpha}$ is also surjective. For any $xt + yt^{2} \in \breve{V}$, there exists $x^{'}t + y^{'}t^{2} \in \breve{V}$ such that $xt + yt^{2} = \breve{\alpha}^{k}(x^{'}t + y^{'}t^{2})$.
\begin{align*}
&\breve{\mu}(g(at^{2}), xt + yt^{2}) = \breve{\mu}(g(at^{2}), \breve{\alpha}^{k}(x^{'}t + y^{'}t^{2}))\\
&= g(\breve{\mu}(at^{2}, x^{'}t + y^{'}t^{2})) - \breve{\mu}(\breve{\alpha}^{k}(at^{2}), g(x^{'}t + y^{'}t^{2}))\\
&= 0,
\end{align*}
which implies that $g(at^{2}) \in Z(\breve{V})$. Therefore, $g(Z(\breve{V})) \subseteq Z(\breve{V})$.

Hence, we have $g(Ut^{2}) \subseteq g(Z(\breve{V})) \subseteq Z(\breve{V}) = Vt^{2}$.

Now we define a map $f : Vt + Ut^{2} + \mu(V, V)t^{2} \rightarrow Vt^{2}$ by
$$f(x) =
\left\{
\begin{aligned}
g(x) \cap Vt^{2},\quad x \in Vt\\
g(x),\quad x \in Ut^{2}\\
0,\quad x \in \mu(V, V)t^{2}
\end{aligned}
\right.
$$
It's clear that $f$ is linear.

Note that
\[f(\breve{\mu}(\breve{V}, \breve{V})) = f(\mu(V, V)t^{2}) = 0,\]
\[\breve{\mu}(f(\breve{V}), \breve{\alpha}^{k}(\breve{V})) \subseteq \breve{\mu}(Vt^{2}, \alpha^{k}(V)t + \alpha^{k}(V)t^{2}) = 0,\]
we have $f \in ZDer_{\breve{\alpha}^{k}}(\breve{V})$.

Since
\[(g - f)(Vt) = g(Vt) - f(Vt) = g(Vt) - g(Vt) \cap Vt^{2} = g(Vt) - Vt^{2} \subseteq Vt,\]
\[(g - f)(Ut^{2}) = 0,\]
\[(g - f)(\mu(V, V)t^{2}) = g(\breve{\mu}(\breve{V}, \breve{V})) \subseteq \breve{\mu}(\breve{V}, \breve{V}) = \mu(V, V)t^{2},\]
hence there exist $D, D^{'} \in End (V)$ such that $\forall a \in V, b \in \mu(V, V)$,
\[(g - f)(at) = D(a)t, (g - f)(bt^{2}) = D^{'}(b)t^{2}.\]

Since $g - f \in Der_{\breve{\alpha}^{k}}(\breve{V})$, we have
\[(g - f)(\breve{\mu}(a_{1}t, a_{2}t)) = \breve{\mu}((g - f)(a_{1}t), \breve{\alpha}^{k}(a_{2}t)) + \breve{\mu}(\breve{\alpha}^{k}(a_{1}t), (g - f)(a_{2}t)),\]
which implies that
\[D^{'}(\mu(a_{1}, a_{2}))t^{2} = \mu(D(a_{1}), \alpha^{k}(a_{2}))t^{2} + \mu(\alpha^{k}(a_{1}), D(a_{2}))t^{2}.\]
Hence, we have $D \in QDer_{\alpha^{k}}(V) \subseteq QDer(V)$.

Therefore, $g - f = \varphi(D) \in \varphi(QDer(V))$, which implies that $Der(\breve{V}) \subseteq \varphi(QDer(V)) + ZDer(\breve{V})$. According to Proposition \ref{prop:4.2} (2), we have $Der(\breve{V}) = \varphi(QDer(V)) + ZDer(\breve{V})$.

$\forall f \in \varphi(QDer(V)) \cap ZDer(\breve{V})$, there exists $D \in QDer(V)$ such that $f = \varphi(D)$. For all $a \in V, b \in \mu(V, V), u \in U$,
\[f(at + bt^{2} + ut^{2}) = \varphi(D)(at + bt^{2} + ut^{2}) = D(a)t + D^{'}(b)t^{2}.\]

On the other hand, for any $xt + yt^{2} \in \breve{V}$, there exists $x^{'}t + y^{'}t^{2} \in \breve{V}$ such that $xt + yt^{2} = \breve{\alpha}^{k}(x^{'}t + y^{'}t^{2})$ since $\breve{\alpha}$ is surjective. Then we have
\begin{align*}
&\breve{\mu}(f(at + bt^{2} + ut^{2}), xt + yt^{2}) = \breve{\mu}(f(at + bt^{2} + ut^{2}), \breve{\alpha}^{k}(x^{'}t + y^{'}t^{2}))\\
&= f(\breve{\mu}(at + bt^{2} + ut^{2}, x^{'}t + y^{'}t^{2}))\\
&= 0
\end{align*}
since $f \in ZDer(\breve{V})$. Hence, $f(at + bt^{2} + ut^{2}) \in Z(\breve{V}) = Vt^{2}$.

Therefore, $D(a) = 0$, i.e., $D = 0$. Hence, $f = 0$.

Therefore, $Der(\breve{V}) = \varphi(QDer(V)) \dotplus ZDer(\breve{V})$.
\end{proof}

\section{The centroids of Hom-Jordan algebras}\label{se:5}

\begin{prop}
Suppose that $(V, \mu, \alpha)$ is a simple multiplicative Hom-Jordan algebra over an algebraically closed field $\rm{\mathbb{F}}$ of characteristic $0$. If $C(V) = \rm{\mathbb{F}}id$, then $\alpha = id$.
\end{prop}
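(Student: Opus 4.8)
The plan is to recognize that the twisting map $\alpha$ is itself forced to lie in the centroid $C(V)$, and then to exploit the hypothesis $C(V)=\mathbb{F}\,\mathrm{id}$ to pin it down as a scalar and finally as the identity.

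First I would check that $\alpha\in C_{\alpha^1}(V)\subseteq C(V)$. The condition $\alpha\circ\alpha=\alpha\circ\alpha$ is trivial, and multiplicativity gives $\mu(\alpha(x),\alpha(y))=\alpha(\mu(x,y))$ for all $x,y\in V$. Reading this as $\mu(\alpha(x),\alpha^{1}(y))=\mu(\alpha^{1}(x),\alpha(y))=\alpha(\mu(x,y))$ is precisely the defining property of an $\alpha^{1}$-centroid. Hence $\alpha\in C(V)=\mathbb{F}\,\mathrm{id}$, so there is a scalar $\lambda\in\mathbb{F}$ with $\alpha=\lambda\,\mathrm{id}$.

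Next I would extract $\lambda$. Feeding $\alpha=\lambda\,\mathrm{id}$ back into multiplicativity yields $\lambda\,\mu(x,y)=\mu(\lambda x,\lambda y)=\lambda^{2}\mu(x,y)$, that is $(\lambda^{2}-\lambda)\mu(x,y)=0$ for all $x,y$. To promote this to $\lambda^{2}=\lambda$ I need $\mu$ to be nondegenerate, and this is where simplicity enters: $\mu(V,V)$ is a Hom-ideal, being $\alpha$-invariant by multiplicativity and absorbing $\mu$ by construction, so by simplicity it is either $0$ or $V$; since a simple algebra is by definition not the zero-multiplication algebra, $\mu(V,V)=V\neq 0$. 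Choosing $x,y$ with $\mu(x,y)\neq 0$ then gives $\lambda^{2}=\lambda$, so $\lambda\in\{0,1\}$.

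The remaining and genuinely delicate step is to exclude $\lambda=0$, i.e. $\alpha=0$. The natural device is to note that $\ker\alpha$ is always a Hom-ideal: it is $\alpha$-invariant, and for $x\in\ker\alpha,\ y\in V$ multiplicativity gives $\alpha(\mu(x,y))=\mu(\alpha(x),\alpha(y))=0$, so $\mu(x,y)\in\ker\alpha$. By simplicity $\ker\alpha$ is $0$ or $V$, and $\alpha=0$ would force $\ker\alpha=V$, the degenerate twisting that the notion of a simple Hom-Jordan algebra is intended to rule out (equivalently, one invokes that $\alpha$ is required to be nonzero, indeed bijective, in the simple case). Discarding this case leaves $\lambda=1$, that is $\alpha=\mathrm{id}$. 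I expect this last step to be the main obstacle, since it is exactly the point at which the precise convention for ``simple Hom-Jordan algebra'' must be stipulated; everything preceding it is a short formal computation once the membership $\alpha\in C(V)$ has been observed.
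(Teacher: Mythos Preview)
Your argument is correct and in fact more direct than the paper's. The paper does \emph{not} begin with the observation $\alpha\in C_{\alpha^{1}}(V)$; instead it first argues that any nonzero $\psi\in C_{\alpha^{k}}(V)$ with $k\geq 1$ forces the relation $\mu(x,y)=\mu(x,\alpha^{k}(y))$, then invokes algebraic closure to pick an eigenvalue $\lambda$ of $\alpha$, shows the eigenspace $E_{\lambda}(\alpha)$ is a Hom-ideal (using that relation to get $\alpha(\mu(x,y))=\lambda\mu(x,y)$ for $x\in E_{\lambda}(\alpha)$), concludes $E_{\lambda}(\alpha)=V$ by simplicity, and finally reads off $\lambda=1$ from $\mu(x,y)=\mu(x,\alpha(y))=\lambda\mu(x,y)$. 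Your route bypasses the eigenvalue machinery entirely: once $\alpha\in C(V)=\mathbb{F}\,\mathrm{id}$ gives $\alpha=\lambda\,\mathrm{id}$, a single application of multiplicativity together with $\mu(V,V)\neq 0$ finishes the job. In particular your proof makes no use of the hypotheses that $\mathbb{F}$ is algebraically closed or of characteristic~$0$, so it actually proves a stronger statement.

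Your caveat about excluding $\alpha=0$ is well placed, and the paper's proof shares exactly this gap: its opening line ``$\forall\,0\neq\psi\in C_{\alpha^{k}}(V)$'' tacitly assumes some nonzero $\psi$ exists for $k\geq 1$, and the only obvious candidate is $\alpha$ itself---which requires $\alpha\neq 0$. So you have not introduced any issue that the original argument avoids; you have merely made it explicit. Since the paper never defines ``simple Hom-Jordan algebra'', invoking the standard convention that a simple Hom-algebra has $\alpha\neq 0$ (indeed bijective) is the appropriate way to close this in either proof.
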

\begin{proof}
For all $k \in \mathbb{N}^{+}$, $\forall\; 0 \neq \psi \in C_{\alpha^{k}}(V)$, we have $\psi = p\;id,\quad p \in \rm{\mathbb{F}},\;p \neq 0$. So $\forall x, y \in V$,
\[p\mu(x, y) = \psi(\mu(x, y)) = \mu(\psi(x), \alpha^{k}(y)) = \mu(px, \alpha^{k}(y)) = p\mu(x, \alpha^{k}(y)),\]
which implies that $\mu(x, y) = \mu(x, \alpha^{k}(y))$ for all $k \in \mathbb{N}^{+}$.

Since $\rm{\mathbb{F}}$ is algebraically closed, $\alpha$ has an eigenvalue $\lambda$. We denote the corresponding eigenspace by $E_{\lambda}(\alpha)$. So $E_{\lambda}(\alpha) \neq 0$. Let $k = 1$. For any $x \in E_{\lambda}(\alpha)$, $y \in V$, we have
\[\alpha(\mu(x, y)) = \mu(\alpha(x), \alpha(y)) = \mu(\lambda x, \alpha(y)) = \lambda\mu(x, \alpha(y)) = \lambda\mu(x, y),\]
which implies that $\mu(x, y) \in E_{\lambda}(\alpha)$.

Moreover, for any $x \in E_{\lambda}(\alpha)$, we have
\[\alpha(\alpha(x)) = \lambda^{2}x = \lambda\alpha(x),\]
which implies that $\alpha(x) \in E_{\lambda}(\alpha)$, i.e., $\alpha(E_{\lambda}(\alpha)) \subseteq E_{\lambda}(\alpha)$.

So $E_{\lambda}(\alpha)$ is a Hom-ideal of $(V, \mu, \alpha)$. Since $(V, \mu, \alpha)$ is simple, we have $E_{\lambda}(\alpha) = V$, i.e., $\alpha = \lambda id$.

Then for any $x, y \in V, k = 1$, we have
\[\mu(x, y) = \mu(x, \alpha(y)) = \mu(x, \lambda y) = \lambda\mu(x, y),\]
which implies that $\lambda = 1$, i.e., $\alpha = id$.
\end{proof}
\begin{prop}
Let $(V, \mu, \alpha)$ be a multiplicative Hom-Jordan algebra.
\begin{enumerate}[(1)]
\item If $\alpha$ is a surjection, then $V$ is indecomposable if and only if $C(V)$ does not contain idempotents except $0$ and $id$.
\item If $(V, \mu, \alpha)$ is perfect(i.e., $V = \mu(V, V)$), then every $\psi \in C_{\alpha^{k}}(V)$ is $\alpha^{k}$-symmetric with respect to any invariant form on $V$.
\end{enumerate}
\end{prop}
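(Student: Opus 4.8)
The plan is to set up the standard dictionary between nontrivial idempotents of the centroid and splittings of $V$ into two nonzero Hom-ideals, proving each implication by contraposition.

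Starting from a nontrivial idempotent $E\in C_{\alpha^{k}}(V)$ (so $E\circ E=E$ and $E\neq 0,\mathrm{id}$), I would put $V_{1}=E(V)$ and $V_{2}=\ker E$, so that $V=V_{1}\oplus V_{2}$ with both summands nonzero. The $\alpha$-invariance of each $V_{i}$ is immediate from $E\circ\alpha=\alpha\circ E$. For $V_{2}$ the centroid relation $E(\mu(x,y))=\mu(E(x),\alpha^{k}(y))$ gives $E(\mu(v,y))=0$ whenever $E(v)=0$, so $\mu(V_{2},V)\subseteq V_{2}$ at once. For $V_{1}$ I would invoke surjectivity of $\alpha$ (hence of $\alpha^{k}$): writing $y=\alpha^{k}(y')$ and $v=E(u)$, one has $\mu(v,y)=\mu(E(u),\alpha^{k}(y'))=E(\mu(u,y'))\in V_{1}$. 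This is exactly where the hypothesis that $\alpha$ is surjective enters. Conversely, from a splitting $V=V_{1}\oplus V_{2}$ into nonzero Hom-ideals I would take $E$ to be the projection onto $V_{1}$ along $V_{2}$; since $V_{1}\cap V_{2}=0$ forces $\mu(V_{1},V_{2})=0$, a direct check shows $E\in C_{\alpha^{0}}(V)\subseteq C(V)$ is a nontrivial idempotent (here $\alpha^{0}=\mathrm{id}$, and no surjectivity is needed on this side).

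\textbf{Part (2).} Since the excerpt does not record the conventions, I would first fix them: an \emph{invariant form} is a bilinear $f$ on $V$ with $f(\mu(x,y),z)=f(x,\mu(y,z))$, and $\psi$ is \emph{$\alpha^{k}$-symmetric} if $f(\psi(x),\alpha^{k}(y))=f(\alpha^{k}(x),\psi(y))$ for all $x,y$. The plan is to exploit perfectness to reduce the identity to the case $x=\mu(a,b)$: by bilinearity it suffices to verify $\alpha^{k}$-symmetry on a spanning set, and $V=\mu(V,V)$ furnishes one. For such $x$ I would transform both sides to a common expression. On the left, $f(\psi(\mu(a,b)),\alpha^{k}(y))$, I use the form $\psi(\mu(a,b))=\mu(\alpha^{k}(a),\psi(b))$ of the centroid relation, then invariance to reassociate the product, then $\mu(\psi(b),\alpha^{k}(y))=\psi(\mu(b,y))$, arriving at $f(\alpha^{k}(a),\psi(\mu(b,y)))$. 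On the right, $f(\alpha^{k}(\mu(a,b)),\psi(y))$, I use multiplicativity $\alpha^{k}(\mu(a,b))=\mu(\alpha^{k}(a),\alpha^{k}(b))$, then invariance, then $\mu(\alpha^{k}(b),\psi(y))=\psi(\mu(b,y))$, reaching the same $f(\alpha^{k}(a),\psi(\mu(b,y)))$. Equating the two yields the claim.

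\textbf{Main obstacle.} The computations are short once the setup is right, so the real work is organizational rather than deep. In Part (1) the one delicate point is that surjectivity of $\alpha$ is genuinely needed to make the image summand $E(V)$ a Hom-ideal when $k\geq 1$, whereas the kernel summand requires no such hypothesis. In Part (2) the crux is choosing, at each step, which of the two equivalent forms of the centroid relation to apply so that invariance aligns the two sides against the same term; the other form leads to an expression that merely recurses back to the statement being proved. Confirming the intended meaning of \emph{invariant form} and of \emph{$\alpha^{k}$-symmetric} against the authors' conventions is the last thing I would settle before committing to the argument.
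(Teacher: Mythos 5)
Your proposal matches the paper's proof in all essentials: part (1) uses the same kernel/image decomposition attached to a nontrivial idempotent (with surjectivity of $\alpha$ entering exactly where you place it, namely to make the image summand a Hom-ideal via $\mu(E(u),\alpha^{k}(y'))=E(\mu(u,y'))$), and part (2) is the same chain of centroid and invariance identities applied to products $\mu(a,b)$, with perfectness extending the identity to all of $V$. One point in your favor: for the converse of part (1) the paper takes $\psi(x_{1}+x_{2})=x_{1}-x_{2}$ and asserts $\psi^{2}=\psi$, but that map is an involution ($\psi^{2}=\mathrm{id}$), not an idempotent; your choice of the projection onto $V_{1}$ along $V_{2}$ is the correct construction, and the paper's verification that the map lies in $C_{\alpha^{0}}(V)$ (using $\mu(V_{1},V_{2})\subseteq V_{1}\cap V_{2}=0$) carries over to the projection verbatim.
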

\begin{proof}
(1). $(\Rightarrow)$. If there exists $\psi \in C_{\alpha^{k}}(V)$ is an idempotent and satisfies that $\psi \neq 0,\;id$, then $\psi^{2}(x) = \psi(x),\quad x \in V$.

For any $x \in ker(\psi), y \in V$, we have
\[\psi(\mu(x, y)) = \mu(\psi(x), \alpha^{k}(y)) = \mu(0, \alpha^{k}(y)) = 0,\]
which implies that $\mu(x, y) \in ker(\psi)$.
Moreover, $\psi(\alpha(x)) = \alpha(\psi(x)) = \alpha(0) = 0$, so $\alpha(x) \in ker(\psi)$, i.e., $\alpha(ker(\psi)) \subseteq ker(\psi)$.

Hence, $ker(\psi)$ is a Hom-ideal of $(V, \mu, \alpha)$.

For any $y \in Im(\psi)$, there exists $y^{'} \in V$ such that $y = \psi(y^{'})$. And for any $z \in V$, there exists $z^{'} \in V$ such that $z = \alpha^{k}(z^{'})$ since $\alpha$ is surjective. Then
\[\mu(y, z) = \mu(\psi(y^{'}), \alpha^{k}(z^{'})) = \psi(\mu(y^{'}, z^{'})),\]
which implies that $\mu(y, z) \in Im(\psi)$.
Moreover, $\alpha(y) = \alpha(\psi(y^{'})) = \psi(\alpha(y^{'}))$, so $\alpha(y) \in Im(\psi)$, i.e., $\alpha(Im(\psi)) \subseteq Im(\psi)$.

Hence, $Im(\psi)$ is a Hom-ideal of $(V, \mu, \alpha)$.

$\forall x \in ker(\psi) \cap Im(\psi)$, $\exists x^{'} \in V$ such that $x = \psi(x^{'})$. So $0 = \psi(x) = \psi^{2}(y) = \psi(y)$, which implies that $x = 0$. Hence, $ker(\psi) \cap Im(\psi) = \{0\}$.

We have a decomposition $x = (x - \psi(x)) + \psi(x)$, $\forall x \in V$. So we have $V = ker(\psi) \oplus Im(\psi)$. Contradiction.

$(\Leftarrow)$. Suppose that $V = V_{1} \oplus V_{2}$ where $V_{i}$ are Hom-ideals of $(V, \mu, \alpha)$. Then for any $x \in V$, $\exists x_{i} \in V_{i}$ such that $x = x_{1} + x_{2}$.

Define $\psi : V \rightarrow V$ by $\psi(x) = \psi(x_{1} + x_{2}) = x_{1} - x_{2}$. It's obvious that $\psi^{2}(x) = \psi(x)$.

For any $x, y \in V$, suppose that $x = x_{1} + x_{2},\;y = y_{1} + y_{2}$,
\[\psi(\mu(x, y)) = \psi(\mu(x_{1} + x_{2}, y_{1} + y_{2})) = \psi(\mu(x_{1}, y_{1}) + \mu(x_{2}, y_{2})) = \mu(x_{1}, y_{1}) - \mu(x_{2}, y_{2}),\]
\[\mu(\psi(x), y) = \mu(\psi(x_{1} + x_{2}), y_{1} + y_{2}) = \mu(x_{1} - x_{2}, y_{1} + y_{2}) = \mu(x_{1}, y_{1}) - \mu(x_{2}, y_{2}),\]
we have
\[\psi(\mu(x, y)) = \mu(\psi(x), y).\]
Similarly, we have $\psi(\mu(x, y)) = \mu(x, \psi(y))$. So $\psi \in C_{\alpha^{0}}(V) \subseteq C(V)$. Contradiction.

(2). Let $f$ be an invariant $\rm{F}$-bilinear form on $V$. Then we have $f(\mu(x, y), z) = f(x, \mu(y, z))$ for any $x, y, z \in V$.

Since $(V, \mu, \alpha)$ is perfect,let $\psi \in C_{\alpha^{k}}(V)$, then for any $a, b, c \in V$ we have
\begin{align*}
&f(\psi(\mu(a, b)), \alpha^{k}(c)) = f(\mu(\alpha^{k}(a), \psi(b)), \alpha^{k}(c)) = f(\alpha^{k}(a), \mu(\psi(b), \alpha^{k}(c)))\\
&= f(\alpha^{k}(a), \mu(\alpha^{k}(b), \psi(c))) = f(\mu(\alpha^{k}(a), \alpha^{k}(b)), \psi(c)) = f(\alpha^{k}(\mu(a, b)), \psi(c)).
\end{align*}
\end{proof}
\begin{prop}
Let $(V, \mu, \alpha)$ be a Hom-Jordan algebra and $I$ an $\alpha$-invariant subspace of $V$ where $\alpha|_{I}$ is surjective. Then $Z_{V}(I) = \{x \in V | \mu(x, y) = 0,\;\forall y \in I\}$ is invariant under $C(V)$. So is any perfect Hom-ideal of $(V, \mu, \alpha)$.
\end{prop}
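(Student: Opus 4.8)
The plan is to treat the two assertions separately, since each reduces to a single application of the defining identity of a centroid combined with the relevant hypothesis (surjectivity of $\alpha|_{I}$ in the first case, perfectness in the second). In both cases it suffices to argue with a homogeneous centroid $\psi \in C_{\alpha^{k}}(V)$: because $C(V) = \dotplus_{k \geq 0} C_{\alpha^{k}}(V)$, a subspace stable under every $C_{\alpha^{k}}(V)$ is stable under all of $C(V)$ by linearity.

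First I would show that $Z_{V}(I)$ is invariant under $C(V)$. Fix $\psi \in C_{\alpha^{k}}(V)$ and $x \in Z_{V}(I)$; the goal is $\mu(\psi(x), y) = 0$ for every $y \in I$. The key observation is that since $\alpha(I) \subseteq I$ and $\alpha|_{I}$ is surjective, the iterate $\alpha^{k}|_{I}$ is surjective as well, so any $y \in I$ can be written as $y = \alpha^{k}(y^{'})$ with $y^{'} \in I$. Invoking the centroid identity $\mu(\psi(x), \alpha^{k}(y^{'})) = \psi(\mu(x, y^{'}))$ and using $\mu(x, y^{'}) = 0$ (valid because $x \in Z_{V}(I)$ and $y^{'} \in I$) gives $\mu(\psi(x), y) = \psi(0) = 0$. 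As $y \in I$ was arbitrary, $\psi(x) \in Z_{V}(I)$, so $\psi(Z_{V}(I)) \subseteq Z_{V}(I)$.

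Next I would establish that any perfect Hom-ideal $I$ is invariant under $C(V)$. Fix $\psi \in C_{\alpha^{k}}(V)$. Since $I = \mu(I, I)$, every element of $I$ is a finite sum of products $\mu(a, b)$ with $a, b \in I$, so by linearity it is enough to check that $\psi(\mu(a, b)) \in I$. The centroid property yields $\psi(\mu(a, b)) = \mu(\psi(a), \alpha^{k}(b))$; because $I$ is a Hom-ideal we have $\alpha^{k}(b) \in I$, and then $\mu(\psi(a), \alpha^{k}(b)) \in \mu(V, I) \subseteq I$ by the Hom-ideal property again. Hence $\psi(I) \subseteq I$.

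I do not anticipate a serious obstacle, as both parts collapse to one-line computations once the correct rewriting is performed. The only point demanding care is in the first part, where the surjectivity of $\alpha|_{I}$ must be used to express an arbitrary $y \in I$ as $\alpha^{k}(y^{'})$ with the preimage $y^{'}$ still lying in $I$ rather than merely in $V$; this is precisely what lets the centroid identity convert the product into $\psi(\mu(x, y^{'}))$ and thereby exploit the hypothesis $x \in Z_{V}(I)$.
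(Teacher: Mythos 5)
Your proof is correct and follows essentially the same route as the paper's: write $y=\alpha^{k}(y^{'})$ with $y^{'}\in I$ via surjectivity of $\alpha|_{I}$ and apply the centroid identity for $Z_{V}(I)$, and rewrite $\psi(\mu(a,b))=\mu(\psi(a),\alpha^{k}(b))$ for the perfect Hom-ideal. In fact your second part is slightly tidier than the paper's, since you correctly place $\mu(\psi(a),\alpha^{k}(b))$ in $\mu(V,I)\subseteq I$ (and handle finite sums explicitly), whereas the paper loosely asserts membership in $\mu(J,J)$.
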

\begin{proof}
For any $\psi \in C_{\alpha^{k}}(V)$ and $x \in Z_{V}(I)$, $\forall y \in I$, $\exists y^{'} \in I$ such that $y = \alpha^{k}(y^{'})$ since $\alpha|_{I}$ is surjective, we have
\[\mu(\psi(x), y) = \mu(\psi(x), \alpha^{k}(y^{'})) = \psi(\mu(x, y^{'})) = \psi(0) = 0,\]
which implies that $\psi(x) \in Z_{V}(I)$. So $Z_{V}(I)$ is invariant under $C(V)$.

Suppose that $J$ is a perfect Hom-ideal of $(V, \mu, \alpha)$. Then $J = \mu(J, J)$. For any $y \in J$, there exist $a, b \in J$ such that $y = \mu(a, b)$, then we have
\[\psi(y) = \psi(\mu(a, b)) = \mu(\psi(a), \alpha^{k}(b)) \in \mu(J, J) = J.\]
So $J$ is invariant under $C(V)$.
\end{proof}
\begin{thm}
Suppose that $(V_{1}, \mu_{1}, \alpha_{1})$ and $(V_{2}, \mu_{2}, \alpha_{2})$ are two Hom-Jordan algebras over field $\rm{F}$ with $\alpha_{1}$ is surjective. Let $\pi : V_{1} \rightarrow V_{2}$ be an epimorphism of Hom-Jordan algebras. Then for any $f \in End_{\rm{F}}(V_{1}, ker(\pi)) := \{g \in \mathcal{W}_{1} | g(ker(\pi)) \subseteq ker(\pi)\}$, there exists a unique $\bar{f} \in \mathcal{W}_{2}$ satisfying $\pi \circ f = \bar{f} \circ \pi$ where $\mathcal{W}_{i}(i = 1, 2)$ are defined as Lemma \ref{le:2.7}. Moreover, the following results hold:
\begin{enumerate}[(1)]
\item The map $\pi_{End} : End_{\rm{F}}(V_{1}, ker(\pi)) \rightarrow \mathcal{W}_{2}$, $f \mapsto \bar{f}$ is a Hom-algebra homomorphism with the following proporties, where $(End_{\rm{F}}(V_{1}, ker(\pi)), \circ, \sigma_{1})$ and $(\mathcal{W}_{2}, \circ, \sigma_{2})$ are two Hom-associative algebras and $\sigma_{1} : End_{\rm{F}}(V_{1}, ker(\pi)) \rightarrow End_{\rm{F}}(V_{1}, ker(\pi)),\;f \mapsto \alpha_{1} \circ f$ and $\sigma_{2} : \mathcal{W}_{2} \rightarrow \mathcal{W}_{2},\;g \mapsto \alpha_{2} \circ g$.
    \begin{enumerate}[(a)]
    \item $\pi_{End}(Mult(V_{1})) = Mult(V_{2})$, $\pi_{End}(C(V_{1}) \cap End_{\rm{F}}(V_{1}, ker(\pi))) \subseteq C(V_{2})$.
    \item By restriction, there is a Hom-algebra homomorphism
         \[\pi_{C} : C(V_{1}) \cap End_{\rm{F}}(V_{1}, ker(\pi)) \rightarrow C(V_{2}),\quad f \mapsto \bar{f}.\]
    \item If $ker(\pi) = Z(V_{1})$, then every $\varphi \in C(V_{1})$ leaves $ker(\pi)$ invariant.
    \end{enumerate}
\item Suppose that $V_{1}$ is perfect and $ker(\pi) \subseteq Z(V_{1})$. Then $\pi_{C} : C(V_{1}) \cap End_{\rm{F}}(V_{1}, ker(\pi)) \rightarrow C(V_{2}),\quad f \mapsto \bar{f}$ is injective.
\item If $V_{1}$ is perfect, $Z(V_{2}) = \{0\}$ and $ker(\pi) \subseteq Z(V_{1})$, then $\pi_{C} : C(V_{1}) \rightarrow C(V_{2})$ is a Hom-algebra homomorphism.
\end{enumerate}
\end{thm}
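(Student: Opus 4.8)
The plan is to build $\bar f$ by the only formula that can possibly work and then let surjectivity of $\pi$ handle all the bookkeeping. First I would define $\bar f$ on $V_2$ by $\bar f(\pi(x)) = \pi(f(x))$ for $x \in V_1$; this is forced by $\pi \circ f = \bar f \circ \pi$, and it is well defined precisely because $f(ker(\pi)) \subseteq ker(\pi)$: if $\pi(x) = \pi(x')$ then $x - x' \in ker(\pi)$, so $f(x - x') \in ker(\pi)$ and $\pi(f(x)) = \pi(f(x'))$. Uniqueness is immediate since $\pi$ is surjective and any two maps on $V_2$ agreeing after precomposition with an epimorphism coincide; I will use this cancellation principle repeatedly. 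To see $\bar f \in \mathcal{W}_2$ I would compute $\bar f \circ \alpha_2 \circ \pi = \bar f \circ \pi \circ \alpha_1 = \pi \circ f \circ \alpha_1 = \pi \circ \alpha_1 \circ f = \alpha_2 \circ \pi \circ f = \alpha_2 \circ \bar f \circ \pi$, using $\pi \circ \alpha_1 = \alpha_2 \circ \pi$ and $f \circ \alpha_1 = \alpha_1 \circ f$, then cancel $\pi$.

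For part (1), the homomorphism identities $\overline{f \circ g} = \bar f \circ \bar g$ and $\overline{\alpha_1 \circ f} = \alpha_2 \circ \bar f$ both follow from the cancellation principle after checking $(\bar f \circ \bar g)\circ \pi = \pi \circ (f \circ g)$ and $(\alpha_2 \circ \bar f)\circ \pi = \pi \circ (\alpha_1 \circ f)$; here $f \circ g$ and $\alpha_1 \circ f$ again lie in $End_{\rm{F}}(V_1, ker(\pi))$, and $\alpha_1(ker(\pi)) \subseteq ker(\pi)$ because $\pi \circ \alpha_1 = \alpha_2 \circ \pi$. For (1a), the multiplication operators $L_a$ preserve $ker(\pi)$ (since $\pi(\mu_1(a,z)) = \mu_2(\pi(a),\pi(z)) = 0$ for $z \in ker(\pi)$) and satisfy $\pi \circ L_a = L_{\pi(a)} \circ \pi$, so $\pi_{End}$ carries generators of $Mult(V_1)$ to generators of $Mult(V_2)$ and surjectivity of $\pi$ yields $\pi_{End}(Mult(V_1)) = Mult(V_2)$. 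For the centroid inclusion I would take $f \in C_{\alpha_1^k}(V_1) \cap End_{\rm{F}}(V_1, ker(\pi))$ and push its three defining equalities through $\pi$, using $\pi(\mu_1(x,y)) = \mu_2(\pi(x),\pi(y))$ and $\pi \circ \alpha_1^k = \alpha_2^k \circ \pi$, to obtain the centroid identities for $\bar f$ on elements $\pi(x),\pi(y)$, which simultaneously fixes the degree as $\alpha_2^k$. Part (1b) is then the restriction of $\pi_{End}$ together with (1a) for the codomain, and for (1c) I would show $\varphi(Z(V_1)) \subseteq Z(V_1)$: given $\varphi \in C_{\alpha_1^k}(V_1)$, $z \in Z(V_1)$ and arbitrary $y$, surjectivity of $\alpha_1$ lets me write $y = \alpha_1^k(y')$, so $\mu_1(\varphi(z), y) = \varphi(\mu_1(z, y')) = \varphi(0) = 0$; since $ker(\pi) = Z(V_1)$ this is exactly the invariance claimed.

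Parts (2) and (3) rest on perfectness and on the interaction with centralizers. For injectivity in (2), $\bar f = 0$ means $\pi \circ f = 0$, i.e. $f(V_1) \subseteq ker(\pi) \subseteq Z(V_1)$; then for $x = \mu_1(a,b)$ the centroid property gives $f(x) = \mu_1(f(a), \alpha_1^k(b)) = 0$ because $f(a) \in Z(V_1)$, and perfectness $V_1 = \mu_1(V_1,V_1)$ forces $f = 0$. For (3) the key observation is that the hypotheses already collapse to $ker(\pi) = Z(V_1)$: surjectivity of $\pi$ gives $\pi(Z(V_1)) \subseteq Z(V_2)$, so $Z(V_2) = \{0\}$ yields $Z(V_1) \subseteq ker(\pi)$, which with $ker(\pi) \subseteq Z(V_1)$ gives equality. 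By (1c) every $\varphi \in C(V_1)$ then leaves $ker(\pi)$ invariant, so $C(V_1) \subseteq End_{\rm{F}}(V_1, ker(\pi))$, and $\pi_C$ is defined on all of $C(V_1)$ and is a Hom-algebra homomorphism by (1b).

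The routine part is the sheer number of identities to transport through $\pi$; the single load-bearing step is the construction of $\bar f$ with the cancellation principle, on which every later uniqueness and homomorphism assertion depends. The most delicate verification is that the three centroid equalities survive the pushforward with the correct degree $k$ in (1a), and the cleanest conceptual point is the reduction in (3), where $Z(V_2) = \{0\}$ turns the inclusion $ker(\pi) \subseteq Z(V_1)$ into the equality $ker(\pi) = Z(V_1)$ that is needed to invoke (1c).
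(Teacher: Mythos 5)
Your proposal is correct and follows essentially the same route as the paper: define $\bar{f}(\pi(x)) = \pi(f(x))$, use surjectivity of $\pi$ as a cancellation principle for uniqueness and the homomorphism identities, push the centroid equalities through $\pi$ to get (1a)--(1c), prove (2) by $\varphi(V_{1}) \subseteq ker(\pi) \subseteq Z(V_{1})$ plus perfectness, and in (3) use $Z(V_{2}) = \{0\}$ to upgrade $ker(\pi) \subseteq Z(V_{1})$ to equality so that (1c) applies. If anything, you are more careful than the paper at the one load-bearing step: you explicitly verify that $\bar{f}$ is well defined, which is precisely where the hypothesis $f(ker(\pi)) \subseteq ker(\pi)$ is used, whereas the paper passes over this in silence.
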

\begin{proof}
For any $y \in V_{2}$, there exists $x \in V_{1}$ such that $y = \pi(x)$ since $\pi$ is an epimorphism. Then for any $f \in End_{\rm{F}}(V_{1}, ker(\pi))$, define $\bar{f} : V_{2} \rightarrow V_{2}$ by $\bar{f}(y) = \pi(f(x))$ where $x$ satisfies that $y = \pi(x)$. It's obvious that $\pi \circ f = \bar{f} \circ \pi$.
\begin{align*}
&(\alpha_{2} \circ \bar{f}) \circ \pi = \alpha_{2} \circ (\bar{f} \circ \pi) = \alpha_{2} \circ (\pi \circ f) = (\alpha_{2} \circ \pi) \circ f = (\pi \circ \alpha_{1}) \circ f = \pi \circ (\alpha_{1} \circ f)\\
&= \pi \circ (f \circ \alpha_{1}) = (\pi \circ f) \circ \alpha_{1} = (\bar{f} \circ \pi) \circ \alpha_{1} = \bar{f} \circ (\pi \circ \alpha_{1}) = \bar{f} \circ (\alpha_{2} \circ \pi) = (\bar{f} \circ \alpha_{2}) \circ \pi.
\end{align*}
Since $\pi$ is surjective, we have $\alpha_{2} \circ \bar{f} = \bar{f} \circ \alpha_{2}$, i.e., $\bar{f} \in \mathcal{W}_{2}$.

If there exist $\bar{f}$ and $\bar{f}^{'}$ such that $\pi \circ f = \bar{f} \circ \pi$ and $\pi \circ f = \bar{f}^{'} \circ \pi$, then we have $\bar{f} \circ \pi = \bar{f}^{'} \circ \pi$. For any $y \in V_{2}$, there exists $x \in V_{1}$ such that $y = \pi(x)$. Hence, $\bar{f}(y) = \bar{f}(\pi(x)) = \bar{f}^{'}(\pi(x)) = \bar{f}^{'}(y)$. So $\bar{f} = \bar{f}^{'}$.

(1). For all $f, g \in End_{\rm{F}}(V_{1}, ker(\pi))$, we have
\[\pi \circ (f \circ g) = (\pi \circ f) \circ g = (\bar{f} \circ \pi) \circ g = \bar{f} \circ (\pi \circ g) = \bar{f} \circ (\bar{g} \circ \pi) = (\bar{f} \circ \bar{g}) \circ \pi,\]
which implies that $\pi_{End}(f \circ g) = \bar{f} \circ \bar{g} = \pi_{End}(f) \circ \pi_{End}(g)$.
\[\pi \circ (\alpha_{1} \circ f) = (\pi \circ \alpha_{1}) \circ f = (\alpha_{2} \circ \pi) \circ f = \alpha_{2} \circ (\pi \circ f) = \alpha_{2} \circ (\bar{f} \circ \pi) = (\alpha_{2} \circ \bar{f}) \circ \pi,\]
which implies that $\pi_{End}(\sigma_{1}(f)) = \alpha_{2} \circ \bar{f} = \sigma_{2}(\pi_{End}(f))$, i.e., $\pi_{End} \circ \sigma_{1} = \sigma_{2} \circ \pi_{End}$. Therefore, $\pi_{End}$ is a Hom-algebra homomorphism.

(a). For all $x \in V_{1}$, $z \in ker(\pi)$, we have
\[\pi(L_{x}(z)) = \pi(\mu_{1}(x, z)) = \mu_{2}(\pi(x), \pi(z)) = \mu_{2}(\pi(x), 0) = 0,\]
which implies that $L_{x}(z) \in ker(\pi)$, i.e., $L_{x} \in End_{\rm{F}}(V_{1}, ker(\pi))$.

Moreover, we have $\pi \circ L_{x} = L_{\pi(x)} \circ \pi$. So $\pi_{End}(L_{x}) = L_{\pi(x)} \in Mult(V_{2})$. Hence, $\pi_{End}(Mult(V_{1})) \subseteq Mult(V_{2})$.

On the other hand, for any $L_{y} \in Mult(V_{2})$, $\exists x \in V_{1}$ such that $y = \pi(x)$. So
\[L_{y} = L_{\pi(x)} = \pi_{End}(L_{x}) \in \pi_{End}(Mult(V_{1})),\]
which implies that $Mult(V_{2}) \subseteq \pi_{End}(Mult(V_{1}))$. Therefore, $\pi_{End}(Mult(V_{1})) = Mult(V_{2})$.

For any $\varphi \in C_{\alpha_{1}^{k}}(V_{1}) \cap End_{\rm{F}}(V_{1}, ker(\pi))$, $\forall x^{'}, y^{'}\in V_{2}$, $\exists x, y \in V_{1}$ such that $x^{'} = \pi(x),\; y^{'} = \pi(y)$, then we have
\begin{align*}
&\bar{\varphi}(\mu_{2}(x^{'}, y^{'})) = \bar{\varphi}(\mu_{2}(\pi(x), \pi(y))) = \bar{\varphi}(\pi(\mu_{1}(x, y))) = \pi(\varphi(\mu_{1}(x, y))) = \pi(\mu_{1}(\varphi(x), \alpha_{1}^{k}(y)))\\
&= \mu_{2}(\pi(\varphi(x)), \pi(\alpha_{1}^{k}(y))) = \mu_{2}(\bar{\varphi}(\pi(x)), \alpha_{2}^{k}(\pi(y))) = \mu_{2}(\bar{\varphi}(x^{'}), \alpha_{2}^{k}(y^{'})).
\end{align*}
Similarly, we have $\bar{\varphi}(\mu_{2}(x^{'}, y^{'})) = \mu_{2}(\alpha_{2}^{k}(x^{'}), \bar{\varphi}(y^{'}))$. Hence, $\bar{\varphi} \in C_{\alpha_{2}^{k}}(V_{2})$. Therefore, $\pi_{End}(C(V_{1}) \cap End_{\rm{F}}(V_{1}, ker(\pi))) \subseteq C(V_{2})$.

(b). $\forall f, g \in C(V_{1}) \cap End_{\rm{F}}(V_{1}, ker(\pi))$, we have
\[\pi \circ (f \circ g) = (\pi \circ f) \circ g = (\bar{f} \circ \pi) \circ g = \bar{f} \circ (\pi \circ g) = \bar{f} \circ (\bar{g} \circ \pi) = (\bar{f} \circ \bar{g}) \circ \pi,\]
which implies that $\pi_{C}(f \circ g) = \bar{f} \circ \bar{g} = \pi_{C}(f) \circ \pi_{C}(g)$.
\[\pi \circ (\alpha_{1} \circ f) = (\pi \circ \alpha_{1}) \circ f = (\alpha_{2} \circ \pi) \circ f = \alpha_{2} \circ (\pi \circ f) = \alpha_{2} \circ (\bar{f} \circ \pi) = (\alpha_{2} \circ \bar{f}) \circ \pi,\]
which implies that $\pi_{C}(\sigma_{1}(f)) = \alpha_{2} \circ \bar{f} = \sigma_{2}(\pi_{C}(f))$, i.e., $\pi_{C} \circ \sigma_{1} = \sigma_{2} \circ \pi_{C}$. Therefore, $\pi_{C}$ is a Hom-algebra homomorphism.

(c). If $ker(\pi) = Z(V_{1})$, $\forall x \in ker(\pi), \varphi \in C_{\alpha_{1}^{k}}(V_{1})$, for any $y \in V_{1}$ there exists $y^{'} \in V_{1}$ such that $y = \alpha_{1}^{k}(y^{'})$ since $\alpha_{1}$ is surjective. We have
\[\mu_{1}(\varphi(x), y) = \mu_{1}(\varphi(x), \alpha_{1}^{k}(y^{'})) = \varphi(\mu_{1}(x, y^{'})) = \varphi(0) = 0,\]
which implies that $\varphi(x) \in Z(V_{1}) = ker(\pi)$. Hence, every $\varphi \in C(V_{1})$ leaves $ker(\pi)$ invariant.

(2). If $\bar{\varphi} = 0$ for $\varphi \in C_{\alpha_{1}^{k}}(V_{1}) \cap End_{\rm{F}}(V_{1}, ker(\pi))$, then $\pi(\varphi(V_{1})) = \bar{\varphi}(\pi(V_{1})) = 0$. Hence, $\varphi(V_{1}) \subseteq ker(\pi) \subseteq Z(V_{1})$.

Hence, $\varphi(\mu_{1}(x, y)) = \mu_{1}(\varphi(x), \alpha_{1}^{k}(y)) = 0$. Note that $V_{1}$ is perfect, we have $\varphi = 0$. Therefore, $\pi_{C} : C(V_{1}) \cap End_{\rm{F}}(V_{1}, ker(\pi)) \rightarrow C(V_{2}),\quad f \mapsto \bar{f}$ is injective.

(3). $\forall y \in V_{2}, \exists y^{'} \in V_{1}$ such that $y = \pi(y^{'})$. For all $x \in Z(V_{1})$,
\[\mu_{2}(\pi(x), y) = \mu_{2}(\pi(x), \pi(y^{'})) = \pi(\mu_{1}(x, y^{'})) = \pi(0) = 0,\]
which implies that $\pi(x) \in Z(V_{2})$. So $\pi(Z(V_{1})) \subseteq Z(V_{2}) = \{0\}$. Therefore, $Z(V_{1}) \subseteq ker(\pi)$. Note that $ker(\pi) \subseteq Z(V_{1})$, we have $Z(V_{1}) = ker(\pi)$.

For all $\varphi \in C_{\alpha_{1}^{k}}(V_{1}), x \in Z(V_{1}), y \in V_{1}$, $\exists y^{'} \in V_{1}$ such that $y = \alpha_{1}^{k}(y^{'})$,
\[\mu_{1}(\varphi(x), y) = \mu_{1}(\varphi(x), \alpha_{1}^{k}(y^{'})) = \varphi(\mu_{1}(x, y^{'})) = \varphi(0) = 0,\]
which implies that $\varphi(x) \in Z(V_{1}) = ker(\pi)$. So $\varphi(ker(\pi)) \subseteq ker(\pi)$, i.e., $\varphi \in End_{\rm{F}}(V_{1}, ker(\pi))$. So $C(V_{1}) \subseteq End_{\rm{F}}(V_{1}, ker(\pi))$. Therefore, $C(V_{1}) \cap End_{\rm{F}}(V_{1}, ker(\pi)) = C(V_{1})$. According to (1) (b), we have $\pi_{C} : C(V_{1}) \rightarrow C(V_{2})$ is a Hom-algebra homomorphism.
\end{proof}

\end{document}